\newtheorem{lemma}{Lemma}
\newtheorem{theorem}{Theorem}
\newtheorem{proposition}{Proposition}
\newtheorem{corollary}{Corollary}
\newcommand{\z}{\mathbb{Z}}
\newcommand{\aw}{\operatorname{aw}}
\newcommand{\awu}{\operatorname{aw}_u}
\title{Rainbow Arithmetic Progressions in Finite Abelian Groups.}
\author{Michael Young\thanks{Iowa State University, \texttt{myoung@iastate.edu}}
}
\begin{document}

\maketitle
\abstract{

For positive integers $n$ and $k$, the \emph{anti-van der Waerden number} of $\z_n$, denoted by $\aw(\z_n,k)$, is the minimum number of colors needed to color the elements of the cyclic group of order $n$ and guarantee there is a rainbow arithmetic progression of length $k$. Butler et al. showed a reduction formula for $\aw(\z_{n},3) = 3$ in terms of the prime divisors of $n$.  In this paper, we analagously define the anti-van der Waerden number of a finite abelian group $G$ and show $\aw(G,3)$ is determined by the order of $G$ and the number of groups with even order in a direct sum isomorphic to $G$. The \emph{unitary anti-van der Waerden number} of a group is also defined and determined.

}

\vspace{.2in}

\noindent{\bf Keywords.} {arithmetic progression; rainbow coloring; anti-Ramsey; groups; abelian groups.}

\section{Introduction}
Let $G$ be a finite additive abelian group. A \emph{$k$-term arithmetic progression} ($k$-AP) of $G$ is a sequence of the form $$a, a+d, a + 2d, \ldots, a+(k-1)d,$$ where $a,d \in G$. For the purposes of this paper, an arithmetic progression is referred to as a set of the form $\{a, a+d, a + 2d, \ldots, a+(k-1)d \}$. A $k$-AP is \emph{non-degenerate} if the arithmetic progression contains $k$ distinct elements; otherwise, the arithmetic progression is \emph{degenerate}.

An \emph{$r$-coloring} of  $G$ is a function $c: G \rightarrow [r]$, where $[r]:=\{1,\dots,r\}$. An $r$-coloring is \emph{exact} if $c$ is surjective. Given $c: G \rightarrow [r]$, an arithmetic progression is called {\em rainbow} (under $c$) if $c(a + id) \neq c(a+jd)$ for all $0 \le i < j \le k-1$. Given $P \subseteq G$, $c(P)$ denotes the set of colors assigned to the elements of $P$, i.e. $c(P) = \{ c(i): i \in P \}$.

The \emph{anti-van der Waerden number} $\aw(G,k)$ is the smallest $r$ such that every exact $r$-coloring of $G$ contains a rainbow $k$-term arithmetic progression. If $G$ has contains no $k$-AP, then $\aw(G,k)=|G|+1$ to be consistent with the property that there is a coloring with $\aw(G,k)-1$ colors that has no rainbow $k$-AP.

Throughout the paper, $\z_n$ will denoted the cyclic group of order $n$ consisting of the set $\{0, 1, \ldots, n-1\}$ under the operation of addition modulo $n$. Define $[\z_n]^s := \underbrace{\z_n \times \z_n \times \cdots \times \z_n}_{s \mbox{ times}}$.

Jungi\'c, Licht, Mahdian, Ne\u{s}etril, and Radoi\u{c}i\'c established several results on the existence of rainbow 3-APs in \cite{J}. Jungi\'c et. al. proved that every 3-coloring of $\mathbb{N}$, where each color class has density at least $1/6$, contains a rainbow 3-AP. They also prove results about rainbow 3-APs in $\z_n$. Other results on colorings of the integers with no rainbow 3-APs have been obtained in \cite{AF} and \cite{AM}.

Anti-van der Waerden numbers were first defined by Uherka in a preliminary study (see \cite{U13}). Butler et. al., in \cite{awpaper}, proved upper and lower bounds for anti-van der Waerden numbers of $[n]$ and $\z_n$ for $k$-APs, for $3 \le k$.

Many of the extremal colorings that are constructed to prove lower bounds of $\aw(G,3)$ require colorings that use some color exactly once, which leads to the need of the following definitions.

An $r$-coloring of $G$ is \emph{unitary} if there is an element of $G$ that is uniquely colored, which will be referred to as a \emph{unitary color}. (A unitary coloring is referred to as a singleton coloring in \cite{awpaper}.) The smallest $r$ such that every exact  $r$-coloring of $G$ that is unitary contains a rainbow $k$-term arithmetic progression is denoted by $\awu(G,k)$. Similar to the anti-van der Waerden number, $\awu(G,k)=|G|+1$ if $G$ has no $k$-AP.

Butler et. al. use Proposition \ref{prime3or4} to determine the exact value of $\aw(\z_n,3)$.
\begin{proposition}\label{prime3or4} \cite[Proposition 3.5]{awpaper}, 
For every prime number $p$, $$3\leq \awu(\z_p,3) = \aw(\z_p,3)\leq 4.$$
\end{proposition}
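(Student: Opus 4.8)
The plan is to deduce all three assertions from a single description of the rainbow-free colorings of $\z_p$. I take $p$ to be an odd prime; for $p=2$ there is no non-degenerate $3$-AP, so both numbers equal $|G|+1=3$ by convention, and for $p=3$ the group is itself a $3$-AP, making both numbers $3$. For the lower bound, a rainbow $3$-AP requires three distinct colors, so no coloring with at most two colors can contain one; in particular the exact $2$-coloring assigning color $1$ to $0$ and color $2$ to every other element is unitary and rainbow-free, whence $\awu(\z_p,3)\ge 3$. The inequality $\awu(\z_p,3)\le\aw(\z_p,3)$ is immediate, since the unitary colorings form a subfamily of all colorings. Finally, $\aw(\z_p,3)\le 4$ will mean ``every exact $4$-coloring has a rainbow $3$-AP''; the statement for all $r\ge 4$ then follows by repeatedly merging color classes down to four and lifting the rainbow $3$-AP found there back to the original coloring, since three distinct merged colors lift to three distinct original colors.

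Two observations drive the argument. The \emph{local lemma}: if $c$ has no rainbow $3$-AP and $a\ne b$ have $c(a)\ne c(b)$, then the three elements completing $\{a,b\}$ to a $3$-AP, namely $2a-b$, $2b-a$, and $2^{-1}(a+b)$, all lie in $\{c(a),c(b)\}$; otherwise one of $\{2a-b,a,b\}$, $\{a,b,2b-a\}$, $\{a,2^{-1}(a+b),b\}$ is rainbow. Writing $C_i$ for the $i$th color class and $2X=\{2x:x\in X\}$, this is equivalent to $(C_i+C_j)\cap 2C_k=\emptyset$ for all distinct colors $i,j,k$. The \emph{containment lemma}: when at least three colors are used, $C_i+C_j$ avoids $2C_k$ for every third color $k$, and those sets cover $2(\z_p\setminus(C_i\cup C_j))$, so $C_i+C_j\subseteq 2C_i\cup 2C_j$. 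From these I claim: \emph{every rainbow-free exact coloring of $\z_p$ uses at most three colors, and if it uses exactly three then one class is a singleton.}

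To prove the claim I argue by contradiction, assuming either four nonempty classes or three classes all of size at least $2$. By Cauchy--Davenport $|C_i|+|C_j|-1\le|C_i+C_j|$, while the containment gives $|C_i+C_j|\le|2C_i\cup 2C_j|\le|C_i|+|C_j|$, so each relevant sumset is within one of its minimum. Applied to a pair whose complementary classes have total size at least $2$ (so that the sumset has size at most $p-2$), Vosper's theorem forces that pair of classes to be arithmetic progressions with a common difference. Running this over the pairs formed by a smallest class together with the others, and using that a class of size $2$ has its difference pinned down, makes all classes arithmetic progressions of one common difference $d$; rescaling by $d^{-1}$ turns them into cyclic intervals partitioning $\{0,\dots,p-1\}$. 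But three intervals each of length at least $2$ (or four or more intervals) always admit a rainbow $3$-AP, for instance one with its two endpoints in outer intervals and its midpoint in a central one, contradicting rainbow-freeness. I expect the main obstacle to be exactly this additive-combinatorial core: the borderline case where a sumset is one above the Cauchy--Davenport minimum, where plain Vosper must be replaced by its stability version (the characterization of $|A+B|=|A|+|B|$), together with the case bookkeeping when several classes are small.

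Granting the structural claim, the theorem follows. No rainbow-free coloring uses four colors, so every exact $4$-coloring contains a rainbow $3$-AP and $\aw(\z_p,3)\le 4$. For the equality it remains to produce a unitary rainbow-free coloring with $\aw(\z_p,3)-1$ colors. If $\aw(\z_p,3)=3$, the unitary $2$-coloring of the first paragraph suffices. If $\aw(\z_p,3)=4$, a rainbow-free exact $3$-coloring exists, and by the structural claim it has a singleton class, hence is already unitary; thus $\awu(\z_p,3)\ge 4$. Either way $\awu(\z_p,3)\ge\aw(\z_p,3)$, giving equality. As a check on the last case, such colorings can be exhibited directly: color $0$ by itself and split $\z_p^{\ast}$ along the cosets of $H=\langle 2,-1\rangle\le\z_p^{\ast}$; since every $3$-AP through $0$ has $0$ as a midpoint or an endpoint, rainbow-freeness reduces to the $H$-invariance conditions $c(x)=c(-x)$ and $c(x)=c(2x)$ on $\z_p^{\ast}$, and a nontrivial such split exists exactly when $H$ is a proper subgroup.
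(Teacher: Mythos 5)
Your peripheral steps are correct and cleanly handled: the small cases $p=2,3$, the unitary exact $2$-coloring giving $3\le\awu(\z_p,3)$, the inequality $\awu(\z_p,3)\le\aw(\z_p,3)$, the color-merging remark, the local lemma, the containment $C_i+C_j\subseteq 2C_i\cup 2C_j$, and the deduction of the equality $\awu(\z_p,3)=\aw(\z_p,3)$ from your structural claim (a rainbow-free exact $3$-coloring has a singleton class, hence is unitary). But the structural claim itself is the entire content of this proposition, and your proof of it has a genuine gap --- one you flag yourself. Cauchy--Davenport together with your containment yields only $|C_i|+|C_j|-1\le|C_i+C_j|\le|C_i|+|C_j|$. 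Vosper's theorem covers the first alternative (and only for classes of size at least $2$, with the sumset at most $p-2$); the second alternative, $|C_i+C_j|=|C_i|+|C_j|$, requires a genuine inverse theorem in $\z_p$ (Hamidoune--R{\o}dseth, or a Freiman ``$3k-4$'' analogue mod $p$), which carries its own size hypotheses and exceptional configurations, and whose conclusion is weaker than what your endgame uses: the classes are then only \emph{contained in} progressions one longer than themselves, i.e.\ they may be punctured progressions rather than progressions, so the rescale-to-intervals argument and the final ``three intervals admit a rainbow $3$-AP'' step do not apply as stated. You invoke no precise statement of the stability theorem, verify none of its hypotheses, and leave the small-class bookkeeping (where Vosper is silent) as a gesture. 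Saying ``I expect the main obstacle to be exactly this additive-combinatorial core'' concedes that the core is missing: what you have is a credible plan, not a proof.

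For comparison, the paper itself does not prove this statement at all; it cites \cite[Proposition 3.5]{awpaper}, which in turn rests on the theorem of Jungi\'c et al.\ \cite{J} that every $3$-coloring of $\z_p$ with all color classes of size at least $2$ contains a rainbow $3$-AP --- precisely your structural claim. So you are attempting to reprove the known hard ingredient by a sumset-rigidity route. That route is plausible and would be a genuinely different (and self-contained) argument if completed, but as written the proposal cannot be accepted: either carry out the inverse-theorem case analysis in full, or do what the sources do and cite \cite{J}, at which point your remaining deductions (including the pleasant closing observation that such colorings come from cosets of $\langle 2,-1\rangle\le\z_p^{\ast}$) go through.
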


Let $n=2^{e_0}p_1^{e_1}p_2^{e_2}\cdots p_s^{e_s}$ such that 
$p_j$ is prime and $0 \leq e_j$ for $0 \leq j \leq s$, 
$\aw(\z_{p_j},3)=3$ for $ 1 \leq j \leq \ell$, and
$\aw(\z_{p_j},3)=4$ for $\ell + 1 \leq j \leq s$. Then corollary 3.15 in \cite{awpaper} can be stated as follows:

\begin{theorem}\label{awequals} \cite[Corollary 3.15]{awpaper}
For any integer $n\geq 2$,  
\[ 
\aw(\z_n,3) = \left\{
\begin{array}{cc} 
2 +\sum\limits_{j=1}^\ell e_j + \sum\limits_{j=\ell+1}^s 2e_j & \mbox{ if } e_0=0 \\
3+ \sum\limits_{j=1}^\ell e_j + \sum\limits_{j=\ell+1}^s 2e_j & \mbox{ if } 1 \le e_0
\end{array}
\right.
\]
\end{theorem}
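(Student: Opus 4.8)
The plan is to prove the formula by induction on the number of prime factors of $n$ (with multiplicity), taking Proposition~\ref{prime3or4} together with the trivial value $\aw(\z_2,3)=3$ (as $\z_2$ has no non-degenerate $3$-AP, this is the $|G|+1$ convention) as base cases. The engine of the induction will be two reduction lemmas, both argued through the projection $\phi\colon\z_{pm}\to\z_m$ whose kernel $H=\langle m\rangle$ is cyclic of order $p$: for an odd prime $p$ and \emph{odd} $m$,
\[
\aw(\z_{pm},3)=\aw(\z_m,3)+\bigl(\aw(\z_p,3)-2\bigr),
\]
and for $p=2$,
\[
\aw(\z_{2m},3)=\begin{cases}\aw(\z_m,3)+1,& m\text{ odd},\\ \aw(\z_m,3),& m\text{ even}.\end{cases}
\]
To assemble the theorem I would build up the odd part of $n$ one odd prime at a time, so that the first lemma is only ever applied with $m$ odd; this is convenient because for odd $m$ a projected $3$-AP $\{\phi(x),\phi(x)+\phi(d),\phi(x)+2\phi(d)\}$ with $\phi(d)\neq0$ is automatically non-degenerate (the equation $2\phi(d)=0$ forces $\phi(d)=0$), so the only degeneracies are whole $3$-APs lying inside a single fiber. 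This produces the constant $2$ and the sum $\sum_j e_j(\aw(\z_{p_j},3)-2)=\sum_{j\le\ell}e_j+\sum_{j>\ell}2e_j$; multiplying in the factors of $2$ afterwards raises the constant from $2$ to $3$ at the first factor and leaves it unchanged thereafter, matching the two cases of the formula.

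For the lower bounds I would exhibit rainbow-free colorings by lifting and then refining an extremal coloring $c_m$ of the quotient, chosen (after translating) to be \emph{unitary} with its unique color on $0$; this is exactly where $\awu$ enters, and I would carry the $\aw$ and $\awu$ statements through the induction in tandem so that such colorings are available. Pulling $c_m$ back along $\phi$ already gives a rainbow-free coloring of $\z_{pm}$ with $\aw(\z_m,3)-1$ colors, since each $3$-AP either projects to a genuine or collapsed $3$-AP of $\z_m$ (inheriting a repeated color from $c_m$) or lies in one fiber (and is monochromatic). To spend the remaining $\aw(\z_p,3)-2$ colors I would re-color the single fiber $H=\phi^{-1}(0)$, which the pullback made monochromatic in the unitary color, by a rainbow-free coloring of $\z_p\cong H$ reusing that color for one class and introducing $\aw(\z_p,3)-2$ new ones. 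Because $0$ is uniquely colored in $c_m$, every $3$-AP whose projection passes through $0$ must take its repeated color on its two elements lying \emph{outside} $H$, so re-coloring $H$ cannot create a rainbow; $3$-APs inside $H$ are controlled by the chosen coloring of $\z_p$. The $p=2$ construction is identical, re-coloring one of the two points of $H=\{0,m\}$.

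The harder direction is the matching upper bound: every exact coloring of $\z_{pm}$ with $\aw(\z_m,3)+\aw(\z_p,3)-2$ colors contains a rainbow $3$-AP. If some fiber $\phi^{-1}(i)\cong\z_p$ carries at least $\aw(\z_p,3)$ colors, then coarsening its coloring down to exactly $\aw(\z_p,3)$ colors yields a rainbow $3$-AP of $\z_p$, and a rainbow for the coarsening is a rainbow for the original coloring; so I may assume every fiber uses at most $\aw(\z_p,3)-1$ colors. Under this assumption the colors are forced to spread across the $m$ fibers, and the goal becomes locating three fibers $i,i+e,i+2e$ forming a $3$-AP of $\z_m$ together with representatives that (a) have three distinct colors and (b) themselves form a genuine $3$-AP of $\z_{pm}$, i.e.\ whose fiber-coordinates form a $3$-AP in $\z_p$. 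Condition (b) is the crux: one cannot freely choose a representative of each fiber, and the choice must be reconciled with an application of $\aw(\z_m,3)$ to a suitable representative-coloring of the quotient. I expect this cross-fiber selection to be the main obstacle, compounded by the non-coprime case $p\mid m$, where $\z_{pm}\not\cong\z_p\times\z_m$ and one must reason directly with the subgroup $H$ instead of a product decomposition. Once the two reduction lemmas are proved in both directions, the theorem follows from the two-phase induction above.
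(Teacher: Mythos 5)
Your skeleton (two reduction lemmas, odd primes first and factors of $2$ second, with $\awu$ carried in tandem) is the right decomposition, and your lower-bound constructions are correct: the pullback of a unitary extremal coloring along $\z_{pm}\to\z_m$ with the fiber over $0$ recolored by an extremal coloring of $\z_p$ is exactly the standard construction, and the uniqueness of the color at $0$ does make it rainbow-free. The genuine gap is that the upper-bound halves of both reduction lemmas are never proved. You name the obstruction yourself (``cross-fiber selection... the main obstacle'') and then conclude with ``once the two reduction lemmas are proved in both directions, the theorem follows'' --- but those upper bounds \emph{are} the theorem; everything else is construction and bookkeeping. The same hole appears in your $p=2$ phase: the claims $\aw(\z_{2m},3)\le\aw(\z_m,3)+1$ ($m$ odd) and $\aw(\z_{2m},3)\le\aw(\z_m,3)$ ($m$ even) are asserted with no argument at all. (For the record, this paper does not prove the statement either --- it imports it as \cite[Corollary 3.15]{awpaper} --- but the machinery the paper develops for its abelian generalization is precisely the tool your sketch is missing.)

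The missing idea is to stop trying to choose a representative in every fiber and instead work with the \emph{color sets} of the fibers, as in Lemmas \ref{1_color_difference} and \ref{aux_coloring} and the proof of Theorem \ref{aw_G_Zp_equality}. Normalize so that the coset $P_0$ of the kernel $H$ carries the most colors; first show $|c(P_i)\setminus c(P_0)|\le 1$ for every coset $P_i$, then define an auxiliary coloring $\overline{c}$ of the quotient by assigning to $P_i$ its unique new color (or a dummy color $\alpha$ if it has none), and show that a rainbow 3-AP under $\overline{c}$ lifts to a rainbow 3-AP upstairs. The observation that dissolves your condition (b) is that in a 3-AP two terms may be chosen freely and the third is then forced: given cosets $P_a,P_{a+d},P_{a+2d}$ with distinct $\overline{c}$-colors, pick $y\in P_{a+d}$ and $z\in P_{a+2d}$ realizing the two distinguishing colors and set $x:=2y-z$; then $x$ lies in $P_a$ automatically, and $c(x)\notin\{c(y),c(z)\}$ because those colors occur \emph{nowhere} in $P_a$ --- no reconciliation between a representative choice and a quotient coloring is needed. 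The count then finishes it: a rainbow-free coloring uses at most $|c(P_0)|+|\overline{c}|-1\le(\aw(\z_p,3)-1)+(\aw(\z_m,3)-1)-1$ colors, one fewer than the target, a contradiction. Note also that this argument uses only the subgroup/quotient structure of $H\le\z_{pm}$, never a product decomposition, so the non-coprime case $p\mid m$ that worries you is not an additional obstacle; the only genuine case split is the one your plan already makes, namely that fibers and quotients of odd order admit the halving/non-degeneracy arguments while the power-of-$2$ part needs the separate parity-coloring treatment.
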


In this paper, Theorem \ref{awequals} is extended to all finite abelian groups Theorem \ref{2m} is generalized to finite abelian groups with order that is a power of $2$.

\begin{theorem}\label{2m} \cite[Theorem 3.5]{J}
For all positive integers $m$, $$aw( \z_{2^m},3) =3.$$
\end{theorem}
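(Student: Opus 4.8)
The plan is to prove $\aw(\z_{2^m},3)=3$ by establishing both bounds. First I would show $\aw(\z_{2^m},3)\geq 3$ by exhibiting an exact $2$-coloring with no rainbow $3$-AP; any $2$-coloring trivially has no rainbow $3$-AP since a rainbow $3$-AP needs three distinct colors, so this bound is immediate and holds for every group of order at least $2$. The substance is the upper bound $\aw(\z_{2^m},3)\leq 3$: I must show that every exact $3$-coloring of $\z_{2^m}$ contains a rainbow $3$-AP.

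For the upper bound, I would proceed by induction on $m$. The base case $m=1$ is $\z_2$, which has only two elements and cannot even be exactly $3$-colored, so the claim holds vacuously (and $m=2$, i.e. $\z_4$, can be checked directly as $\{0,1,2\}$ and $\{0,2,3\}$ type progressions cover the cases). For the inductive step, suppose the result holds for $\z_{2^{m-1}}$ and let $c$ be an exact $3$-coloring of $\z_{2^m}$. The key structural idea is to use the index-$2$ subgroup $H=2\z_{2^m}\cong \z_{2^{m-1}}$ consisting of the even residues, together with its coset $H+1$ of odd residues. I would consider the quotient map $\pi:\z_{2^m}\to\z_2$ and analyze how the three colors distribute across the two cosets.

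The main case analysis runs as follows. If all three colors already appear on the subgroup $H$ of even elements, then $H$ carries an induced exact $3$-coloring (viewing $H\cong\z_{2^{m-1}}$ via $x\mapsto x/2$), and since a $3$-AP in $H$ maps to a $3$-AP in $\z_{2^m}$, the induction hypothesis produces a rainbow $3$-AP. Symmetrically, if all three colors appear on the odd coset $H+1$, I would use that this coset, under the affine identification $x\mapsto (x-1)/2$, also inherits a structure supporting the induction. Otherwise each coset sees at most two colors, and since all three colors must appear overall, one color is confined to one coset and each coset is missing at least one color; I would then build a rainbow $3$-AP that crosses between the cosets, exploiting that for $a\in H$ and $b\in H+1$, the element $2b-a$ (the reflection completing $a,b$ to a $3$-AP with middle term $b$) lies in $H$, while $2a-b\in H+1$, so that short progressions straddling the two cosets are abundant and can be steered to hit three distinct colors.

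The hard part will be the mixed case where the colors split unevenly across the two cosets, since there I cannot simply invoke induction and must instead exhibit a concrete rainbow $3$-AP by hand. The delicate point is controlling the middle term: an arithmetic progression $\{x-d,x,x+d\}$ is rainbow exactly when the three colors are distinct, and I must guarantee such a configuration exists given only the parity constraints on where each color lives. I expect to handle this by fixing representatives of each color and arguing that the common difference can be chosen (odd or even, of the right $2$-adic valuation) so that the three terms land in prescribed cosets carrying prescribed colors; the richness of $3$-APs in a $2$-group, where differences of every valuation are available, should make such a choice possible, but verifying it uniformly across the parity subcases is the technical crux.
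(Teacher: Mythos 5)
First, a point of reference: the paper does not prove this statement at all --- Theorem \ref{2m} is quoted from Jungi\'c et al.\ \cite[Theorem 3.5]{J} --- so there is no internal proof to compare against, and your attempt must stand on its own. Your lower bound is correct (an exact $2$-coloring is trivially rainbow-free), and the first two cases of your induction are sound: if all three colors appear on the even subgroup $H$, or all three appear on the odd coset $H+1$, then the identifications $x \mapsto x/2$ and $x \mapsto (x-1)/2$ carry $3$-APs to $3$-APs in both directions, so the induction hypothesis applies.

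The gap is the mixed case, and it is larger than your sketch suggests. Your plan to ``steer the common difference'' misreads the geometry: a crossing $3$-AP has both endpoints in one coset and its middle term in the other, and once the endpoints $x,z$ are fixed, the middle term is determined up to addition of $2^{m-1}$ --- there are exactly two candidates, $y$ and $y+2^{m-1}$, and they lie in the same coset, so no freedom in the $2$-adic valuation of $d$ remains. The genuinely hard subcase is $c(H)=\{1,2\}$ and $c(H+1)=\{1,3\}$, where the cosets share color $1$: a rainbow $3$-AP must have even endpoints colored $1,2$ with odd middle colored $3$, or odd endpoints colored $1,3$ with even middle colored $2$, and in both situations the two candidate middles can both carry the shared color $1$, so no single configuration is forced to be rainbow. (In the disjoint subcases, e.g.\ $c(H+1)=\{3\}$, a short mod-$4$ pigeonhole argument does finish, which may be what made the general case feel routine.) Ruling out the shared-color subcase requires playing the constraints from both families of crossing APs against exactness of the coloring --- in effect a second level of the even/odd analysis carried out mod $4$, and in general iterated $2$-adically --- and nothing in your proposal supplies this. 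Since you yourself flag this as ``the technical crux'' and leave it unverified, the proof is incomplete at precisely the point where the theorem's content lies.
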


In section \ref{sec:aw}, a closed formula for $\aw(G,3)$ is given. This closed formula is determined by the order of $G$ and the number of groups with even order in a direct sum isomorphic to $G$. In section \ref{sec:awu}, a similar closed formula for $\awu(G,3)$ is given.

\section{Anti-van der Waerden Numbers} \label{sec:aw}

In this section, a reduction formula for the anti-van der Waerden number of groups that have odd order is created. Determining the anti-van der Waerden number of an abelian group with odd order is equivalent to determining the anti-van der Waerden number of $\z_m \times \z_n$ for some  positive odd integers $m$ and $n$. First we provide a proof of a useful remark from \cite{awpaper}.

\begin{proposition}\cite[Remark 3.16]{J}
For all positive integers $n$, $$\awu(\z_n,3) = \aw(\z_n,3).$$
\end{proposition}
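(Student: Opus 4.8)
The plan is to prove the two inequalities separately. The easy direction, $\awu(\z_n,3)\le \aw(\z_n,3)$, is immediate from the definitions: every unitary coloring is in particular a coloring, so if every exact $r$-coloring of $\z_n$ contains a rainbow $3$-AP, then so does every unitary exact $r$-coloring. Taking $r=\aw(\z_n,3)$ shows that $\aw(\z_n,3)$ is an admissible value in the definition of $\awu$, whence $\awu(\z_n,3)\le\aw(\z_n,3)$.

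For the reverse inequality $\aw(\z_n,3)\le\awu(\z_n,3)$, I would prove the equivalent statement that there is a \emph{unitary} exact coloring of $\z_n$ using $\aw(\z_n,3)-1$ colors that contains no rainbow $3$-AP. Such a coloring witnesses $\awu(\z_n,3)\ge \aw(\z_n,3)$, since it certifies that $\aw(\z_n,3)-1$ colors do not force a rainbow $3$-AP even among unitary colorings. Thus the whole problem reduces to upgrading a rainbow-free coloring with the maximum number of colors to one that is additionally unitary, without changing the number of colors.

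To carry this out, first fix any exact $(\aw(\z_n,3)-1)$-coloring $c$ with no rainbow $3$-AP, which exists by the definition of $\aw$. If $c$ is already unitary we are done, so assume every color class has size at least $2$. I would then isolate a single element: choose a color class $C$ of minimum size and an element $x\in C$, and recolor every element of $C\setminus\{x\}$ using colors already present in $c$, so that the color of $x$ becomes a singleton while all $\aw(\z_n,3)-1$ colors survive (the color of $x$ still appears at $x$, and the target colors were already present). The resulting coloring is unitary and uses the full palette; the content of the argument is to make the recoloring choices so that no rainbow $3$-AP is created.

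The main obstacle is exactly this last verification. Recoloring an element $y$ can only create a rainbow $3$-AP that passes through $y$ and uses $y$'s new color, so for each such $y$ I must choose its new color to \emph{block} every $3$-AP $\{y,p,q\}$ with $c(p)\neq c(q)$, i.e. to agree with $c(p)$ or $c(q)$. Controlling these competing constraints simultaneously is the crux, and I expect it to require a structural description of rainbow-free colorings of $\z_n$. A cleaner alternative, which sidesteps the transformation entirely, is to appeal directly to the extremal colorings used to establish the lower bound in Theorem \ref{awequals}: those constructions already color one element uniquely and hence are unitary. In the prime case this is precisely Proposition \ref{prime3or4}, and the product/reduction structure behind Theorem \ref{awequals} propagates the unitary property from the prime factors to $\z_n$, yielding the desired unitary rainbow-free $(\aw(\z_n,3)-1)$-coloring and completing the reverse inequality.
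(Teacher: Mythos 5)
Your reduction of the problem is correct, and your ``cleaner alternative'' is in fact the paper's strategy: the paper builds a unitary rainbow-free coloring of $\z_n$ with $\aw(\z_n,3)-1$ colors by induction on the odd prime divisors of $n$, with Proposition \ref{prime3or4} handling primes and Theorem \ref{awequals} handling the arithmetic. But as written your proposal has a genuine gap: neither of your routes actually produces the coloring. Your primary route (recoloring all but one element of a smallest color class) is one you yourself flag as unresolved, and it should be abandoned; there is no evident way to control the rainbow $3$-APs through the recolored elements. Your fallback route asserts that ``the product/reduction structure behind Theorem \ref{awequals} propagates the unitary property from the prime factors to $\z_n$,'' but that propagation claim is precisely the statement requiring proof, and it is not automatic: a layered combination of rainbow-free colorings of two factors need \emph{not} be rainbow-free in general --- the verification succeeds here only because the factor colorings are unitary, so unitarity is an ingredient of the argument, not merely a property carried along by it.

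Concretely, the paper's inductive step writes $\z_n = G \times \z_p$ with $p$ an odd prime, takes unitary rainbow-free colorings $c_G$ of $G$ and $c_p$ of $\z_p$ (with disjoint palettes, each coloring $0$ uniquely), and sets $c(g,h)=c_G(g)$ if $h=0$ and $c(g,h)=c_p(h)$ if $h\neq 0$. The rainbow-free check splits on the second coordinate of a $3$-AP: if $d_2=0$ the AP is monochromatic or lies inside the $c_G$-colored layer; if $d_2\neq 0$, oddness of $p$ makes the projection $\{a_2,a_2+d_2,a_2+2d_2\}$ a non-degenerate $3$-AP in $\z_p$, and the dangerous case in which one term of this projection equals $0$ is blocked exactly because $c_p(0)$ is a unitary color: non-rainbowness of the projected AP under $c_p$ then forces the remaining two elements to receive the same color. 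This is the step your sketch omits, and it is where unitarity does real work. Finally, one must count: the coloring $c$ is unitary (only $(0,0)$ receives the color $c_G(0)$) and uses $\awu(G,3)+\awu(\z_p,3)-3$ colors, and only by combining the induction hypothesis ($\awu=\aw$ on the factors) with Theorem \ref{awequals} does this number equal $\aw(\z_n,3)-1$; without that count the construction could certify a weaker lower bound than needed. The base case ($n$ a power of $2$) also deserves a sentence: there $\aw(\z_n,3)=3$ by Theorem \ref{2m}, and any exact unitary $2$-coloring is trivially rainbow-free.
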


\begin{proof}
It is obvious that $\awu(\z_n,3) \le \aw(\z_n,3).$ The inequality $\aw(\z_n,3) \le \awu(\z_n,3)$ will be shown by induction on the number of odd prime divisors of $n$. It is obviously true if $n$ is a power of 2. Assume $n$ is not a power of 2.

Let $\z_n = G \times \z_p$, where $G$ be a finite cyclic group and $p$ be an odd prime. Let $c_G$ be a unitary coloring of $G$ with exactly $\awu(G,3)-1$ colors and no rainbow 3-AP, and $c_p$ be a unitary coloring of $\z_p$ with exactly $\awu(\z_p,3)-1$ different colors. Without loss of generality, let 0 be uniquely colored by $c_G$ and $c_p$. For each $(g,h) \in G \times \z_p$, define $c$ as follows:

\[
c(g,h) = \left\{
\begin{array}{cl}
c_G(g) & \mbox{ if } h = 0,\\
c_p(h) & \mbox{ if } h \neq 0\\
\end{array}
\right.
\]

Let $\{(a_1,a_2), (a_1 + d_1, a_2+d_2), (a_1 + 2d_1, a_2+2d_2) \}$ be a 3-AP of $G \times \z_p$. Since $p$ is odd, $\{ a_2, a_2+d_2, a_2+2d_2 \}$ is a non-degenerate 3-AP in $\z_p$. Therefore, $0$, $1$, or $3$ elements of $\{(a_1,a_2), (a_1 + d_1, a_2+d_2), (a_1 + 2d_1, a_2+2d_2) \}$ will be assigned a color by $c_G$.

If $a_2 = 0$ and  $d_2 = 0$, then the 3-AP is colored by $c_G$ and is not rainbow. If $a_2 \neq 0$ and  $d_2 = 0$, then all the elements of the 3-AP are colored with the same color. If $d_2 \neq 0$, then the 3-AP is colored by $c_p$ (since $c_p(0)$ is a unitary color) and is not rainbow. Therefore no 3-AP in $G \times \z_p$ is rainbow under $c$.

The color $c(0,0)$ is unique; therefore, $c$ is a unitary coloring of $G \times \z_p$. So,
\begin{eqnarray*}
\awu(G \times \z_p,3) -1 &\ge& |c(G \times \z_p)| \\
&=& \awu(G,3) + \awu(\z_p,3) -3\\
\mbox{(by induction hypothesis)} &=&\aw(G,3) + \aw(\z_p,3) -3  \\
 \mbox{(by Theorem \ref{awequals})} &=& \aw(G \times \z_p,3) -1.
\end{eqnarray*}


Therefore, $\awu(\z_n,3) \ge \aw(\z_n,3)$.

\end{proof}

Now a coloring with no rainbow 3-APs is constructed to determine a lower bound.

\begin{proposition} \label{aw_awu_lowerbound}
For all positive integers $n$, $$\aw(G,3) + \aw(\z_n,3)-2 \le aw(G \times \z_n, 3).$$
\end{proposition}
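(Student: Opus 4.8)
To prove the lower bound it suffices to exhibit an exact coloring of $G \times \z_n$ using $\aw(G,3)+\aw(\z_n,3)-3$ colors with no rainbow $3$-AP. The plan is to glue together extremal colorings of the two factors. Fix a coloring $c_G$ of $G$ with $\aw(G,3)-1$ colors and no rainbow $3$-AP, and, invoking the preceding proposition $\awu(\z_n,3)=\aw(\z_n,3)$, a \emph{unitary} coloring $c_n$ of $\z_n$ with $\aw(\z_n,3)-1$ colors and no rainbow $3$-AP; after translating we may assume its unitary color sits at $0$. Taking the two palettes disjoint, define
\[
c(g,h) = \left\{ \begin{array}{cl} c_G(g) & \mbox{if } h = 0,\\ c_n(h) & \mbox{if } h \neq 0. \end{array}\right.
\]
Because the unitary color $c_n(0)$ no longer appears once the slice $h=0$ is removed, $c$ is exact on a palette of size $(\aw(G,3)-1)+(\aw(\z_n,3)-2)=\aw(G,3)+\aw(\z_n,3)-3$, as required.

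The heart of the argument is to show $c$ has no rainbow $3$-AP, which I would organize by how many terms of a $3$-AP lie on the slice $h=0$. Writing the progression as $\{(a,b),(a+d,b+e),(a+2d,b+2e)\}$, its second-coordinate projection $\{b,b+e,b+2e\}$ is itself a $3$-AP in $\z_n$. If no term lies on the slice, the whole progression is colored through this $\z_n$-projection and is not rainbow (a non-degenerate projection uses $c_n$, which is rainbow-free, while a degenerate one repeats a second coordinate). If all three terms lie on the slice then $e=0$ and the progression is colored by $c_G$ through the $3$-AP $\{a,a+d,a+2d\}$ in $G$, again not rainbow. The case of exactly one term on the slice is precisely the unitary step of the preceding proposition: the two off-slice second coordinates, together with $0$, form a rainbow-free $3$-AP of $\z_n$, and the uniqueness of $c_n(0)$ forces those two terms to repeat a color.

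The main obstacle is the remaining case, in which exactly two terms land on the slice. This forces $2e=0$ with $e\neq 0$, so it can occur only when $n$ is even, with $e=n/2$, and the progression then takes the shape $\{(a,0),(a+d,n/2),(a+2d,0)\}$, which is non-degenerate exactly when $2d\neq 0$ in $G$. The two slice terms carry colors $c_G(a)$ and $c_G(a+2d)$, and rainbow-freeness of $c_G$ on $\{a,a+d,a+2d\}$ does not by itself force these two outer colors to agree (the forced coincidence could instead involve the middle term $a+d$, whose color is irrelevant here). I expect this to be the crux of the proof.

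To dispose of the obstacle I would split on parity. For $n$ odd the troublesome case never arises and the construction above already finishes the argument. For $n$ even I would pursue the symmetric construction, placing the slice over the fiber $\{0\}\times\z_n$ and letting the factor $G$ carry the unitary coloring; this succeeds whenever $G$ has odd order, since then no difference of order two exists in $G$ to create the bad configuration. The genuinely delicate situation is when $G$ and $\z_n$ are both even, where I would attempt to enlarge the special set to include the order-two coset $G\times\{n/2\}$ and color it again by $c_G$, so that every progression of the bad shape becomes entirely $c_G$-colored through a genuine $3$-AP of $G$; the careful bookkeeping needed to confirm that this enlargement preserves both the color count $\aw(G,3)+\aw(\z_n,3)-3$ and the analysis of all other cases is the remaining technical point.
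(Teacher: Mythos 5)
Your construction is exactly the paper's intended proof, and for odd $n$ your case analysis is correct and in fact written more carefully than the paper's. (The paper's displayed coloring has its two cases interchanged --- as literally written it assigns the single color $c_n(0)$ to the whole slice $h=0$ and produces rainbow 3-APs --- but the intended coloring is precisely yours, and the paper's no-rainbow argument is the same split according to how a 3-AP meets the slice, with unitarity of $c_n(0)$ handling the one-term case.) The ``crux'' you identify is real, and it is worth knowing that it is a gap in the paper as well: the proposition is stated for all positive integers $n$, but the paper's proof explicitly says ``Since $n$ is odd\dots'' and never treats even $n$. This does not harm the paper's results, because the proposition is only ever invoked in Theorem \ref{aw_G_Zp_equality}, where $n$ is assumed odd; the honest self-contained statement of what both you and the paper actually prove is the proposition restricted to odd $n$.

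Two cautions about your proposed repairs for even $n$. First, the role-swapping fix (unitary coloring on $G$ when $|G|$ is odd) needs $\awu(G,3)=\aw(G,3)$ for a general odd abelian group $G$, which at this point in the paper is only known for cyclic groups, so it risks circularity. Second, enlarging the special set to $G\times\{0,n/2\}$ fails outright when $4\mid n$: the 3-AP with second coordinates $0$, $n/4$, $n/2$ meets the enlarged set in exactly its two outer terms, recreating the very obstruction you are trying to kill (and the construction may also lose a color, since $c_n(n/2)$ can itself be unique). For even $n$ the inequality is in fact true, but the only proof available in the paper is ex post, by comparing the closed formulas of Corollary \ref{awformula} and Theorem \ref{2_to_the_m_s}; no direct product coloring of this type is known to give it.
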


\begin{proof}
It suffices to show that $\aw(G,3) + \awu(\z_n,3)-2 \le aw(G \times \z_n, 3).$ For each $g \in G$, let $P_g = \{ (g, h) : h \in \z_n \}$. Let $c_{G}$ be a coloring of $G$ with $\aw(G,3)-1$ colors with no rainbow 3-AP and $c_n$ be a unitary coloring of $\z_n$ with $\awu(\z_n,3)-1$ colors with no rainbow 3-AP. Without loss of generality, assume that 0 is an element of $\z_n$ that is uniquely colored by $c_n$.

Now define a coloring of $G \times \z_n$ with $aw(G,3) + \awu(\z_n,3)-2$ colors as follows: 

\[
c(g,h) = \left\{
\begin{array}{cl}
c_G(g) & \mbox{ if } h \neq 0,\\
c_n(h) & \mbox{ if } h = 0.\\
\end{array}
\right.
\]

Under the coloring $c$, there can be no rainbow 3-AP in any $P_g$. Since $n$ is odd, every other 3-AP must contain an element from $P_a$, $P_{a+d}$, and $P_{a+2d}$ for some $a,d \in \z_n$. However, such a 3-AP is not rainbow because $\{a, a+d, a+2d\}$ is not a rainbow 3-AP under $c_G$.
\end{proof}

The main tool used for determining the anti-van der Waerden number of abelian groups with odd order is applying Lemma \ref{1_color_difference} to create a well-defined auxiliary coloring of a specific subgroup.

Let $G$ be a group and $n$ be an odd positive integer. Partition $G \times \z_n$ by letting  $P_g = \{ (g,x) | x \in \z_n \}$ for each $g \in G$. Without loss of generality, let $|c(P_g)| \le |c(P_0)|$ for all $g \in G$.

Since $n$ is odd, 2 has a unique multiplicative inverse in $\z_n$. Therefore, for every $x \in \z_n$ there exists a $d \in \z_n$ such that $x = 2d$.
So given an AP in $G$, say $\{x_1, y_1, z_1\}$, and $x_2, z_2 \in \z_n$, there exists a unique $y_2 \in \z_n$ such that $x_2 + z_2 = 2y_2$, which yields $\{(x_1, x_2), (y_1, y_2), (z_1, z_2) \}$, a 3-AP in $G \times \z_n$.

\begin{lemma} \label{1_color_difference}
If $c$ is a coloring of $G \times \z_n$ with no rainbow 3-AP, then $|c(P_g) \setminus c(P_0)| \le 1$ for all $g \in G$.
\end{lemma}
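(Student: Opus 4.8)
The plan is to argue by contradiction. Suppose some block $P_g$ contains two colors $\alpha \neq \beta$ that do not occur in $P_0$, say $\alpha = c(g,a)$ and $\beta = c(g,b)$; since a single element cannot carry two colors, $a \neq b$ in $\z_n$. (If $g=0$ the statement is vacuous, and the case $2g=0$, where $P_{2g}=P_0$, will fall out of the same computation below.) The main tool will be the midpoint construction described just before the lemma, applied to the $3$-AP $\{0,g,2g\}$ of $G$ whose middle term is $g$: for any $s \in \z_n$ there is a unique completion making $\{(0,s),(g,a),(2g,2a-s)\}$ a $3$-AP of $G \times \z_n$, because $n$ is odd.

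First I would fix the middle term to be $(g,a)$ and let $s$ range over $\z_n$. Since $c$ has no rainbow $3$-AP and $c(0,s) \neq \alpha$ (as $\alpha \notin c(P_0)$), each such progression forces $c(2g,2a-s) \in \{\alpha,\, c(0,s)\}$. As $2a-s$ runs over all of $\z_n$, this gives $c(P_{2g}) \subseteq c(P_0) \cup \{\alpha\}$. Repeating verbatim with the middle term $(g,b)$ gives $c(P_{2g}) \subseteq c(P_0) \cup \{\beta\}$, and intersecting the two, using $\alpha \neq \beta$, yields the containment $c(P_{2g}) \subseteq c(P_0)$; that is, the presence of two new colors in $P_g$ forces $P_{2g}$ to introduce none.

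With this in hand the option ``$c(2g,\cdot)=\alpha$'' (respectively $\beta$) is impossible, so the inclusions sharpen to the exact identities $c(2g,r)=c(0,2a-r)$ and $c(2g,r)=c(0,2b-r)$ for every $r$. Equating them yields $c(0,x)=c(0,x+2(a-b))$ for all $x$, a nontrivial translation symmetry of the coloring of $P_0$ (nontrivial because $a\neq b$ and $2$ is a unit modulo the odd number $n$). I would then combine this periodicity with the normalization $|c(P_0)| \ge |c(P_g)| \ge 2$: the symmetry confines the colors of $P_0$ to the cosets of $\langle a-b\rangle$, and I would look for a $3$-AP whose middle term is kept at a new color ($\alpha$ or $\beta$) while its two outer terms are slid into cosets of distinct $P_0$-color, producing a rainbow $3$-AP and the desired contradiction.

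I expect this last step to be the main obstacle. When $\gcd(a-b,n)=1$ the translation symmetry already forces $P_0$ to be monochromatic, contradicting $|c(P_0)| \ge 2$ at once; but when $\gcd(a-b,n)>1$ the two new colors may lie in a single coset of the symmetry subgroup, in which case fixing the middle at $(g,a)$ or $(g,b)$ reflects the outer terms to equal colors and no rainbow appears. Resolving this requires relocating the middle term to a point of $P_g$ outside that coset while keeping all three colors distinct, which needs some control of the coloring of $P_g$ on the remaining cosets; I anticipate closing it by a short bootstrapping of the same midpoint argument (or an auxiliary case analysis on where the new colors of $P_g$ can sit) rather than by a single clean progression.
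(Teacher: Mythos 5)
Your proposal is correct and cleanly executed up through the periodicity statement: the containment $c(P_{2g}) \subseteq c(P_0)$, the exact identities $c(2g,r) = c(0,2a-r) = c(0,2b-r)$, and the resulting invariance of $c$ on $P_0$ under translation by $2(a-b)$ are all valid (including when $2g=0$). But the proof stops there; the contradiction is never reached, and the obstruction you flag at the end is fatal to the strategy you propose, not a technical nuisance. Let $K = \{ t \in \z_n : c(0,x+t)=c(0,x) \mbox{ for all } x\}$. Your own identities show that every $y$ with $c(g,y) \notin c(P_0)$ must satisfy $a - y \in K$ (otherwise some endpoint pair would already be rainbow with the middle), and for any such middle term the two endpoints $(0,s)$ and $(2g,2y-s)$ automatically get \emph{equal} colors, since $c(2g,2y-s) = c(0,s+2(a-y)) = c(0,s)$. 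So no progression whose middle carries a new color can ever be rainbow: ``keeping the middle at $\alpha$ or $\beta$'' provably cannot close the argument, and the fallback you sketch (relocating the middle off the coset $a+K$, bootstrapping, auxiliary case analysis) is precisely the part that is missing. As written, this is an incomplete proof.

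The missing ingredient is that the normalization $|c(P_h)| \le |c(P_0)|$ gives you more than $|c(P_0)| \ge 2$: since $\alpha,\beta \in c(P_g)\setminus c(P_0)$, there must exist two distinct colors $\gamma,\rho \in c(P_0)\setminus c(P_g)$. This is what the paper's proof runs on, and the decisive progressions place $\gamma,\rho$ on an \emph{endpoint} rather than $\alpha,\beta$ on the middle. First, for any $z \in P_{2g}$ choose $x \in P_0$ with $c(x) \in \{\gamma,\rho\}\setminus\{c(z)\}$ and let $y \in P_g$ be the midpoint of $x$ and $z$; since $c(y) \in c(P_g)$ while $c(x) \notin c(P_g)$, non-rainbowness forces $c(y)=c(z)$, so $c(P_{2g}) \subseteq c(P_g)$, and in particular $\gamma \notin c(P_{2g})$. (When $2g=0$ this step already yields $c(P_0) \subseteq c(P_g)$, contradicting the existence of $\gamma$.) Now fix $x^* \in P_0$ with $c(x^*)=\gamma$ and slide $z$ over all of $P_{2g}$: the midpoints $y$ sweep out all of $P_g$ bijectively, and since neither $c(y)$ nor $c(z)$ can equal $\gamma$, each progression forces $c(y)=c(z)$. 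Hence $c(P_g) \subseteq c(P_{2g}) \subseteq c(P_0)$, where the last containment is the one you proved, contradicting $\alpha \in c(P_g)\setminus c(P_0)$. Your periodicity analysis, while true, becomes unnecessary once $\gamma$ and $\rho$ are put into play.
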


\begin{proof}
Assume there is a $g \in G$ such that $2 \le |c(P_g) \char92 c(P_0)|$. Let $\alpha, \beta \in c(P_g) \char92 c(P_0)$ and $\gamma, \rho \in c(P_0) \char92 c(P_g)$. By maximality of $c(P_0)$, $\gamma$ and $\rho$ exists, and neither are equal to $\alpha$ or $\beta$. 

If there exists a $z \in P_{2g}$ such that $c(z)$ is not a color in $c(P_0)$, then there is a $y \in P_{g}$ such that $c(y) \in \{\alpha, \beta \}$ and $c(y) \neq c(z)$. Therefore, there is an $x \in P_0$ such that $\{x, y, z \}$ is a 3-AP in $G \times \z_n$ and $x$ does not have the same color as $y$ or $z$. This is a contradiction since this arithmetic progression is rainbow.

If there exists a $z \in P_{2g}$ such that $c(z)$ is not a color in $c(P_g)$ and $g \neq |G|/2$, then there is an $x \in P_{0}$ such that $c(x) \in \{\gamma, \rho \}$ and $c(x) \neq c(z)$. Therefore, there is a $y \in P_0$ such that $\{x, y, z \}$ is a 3-AP in $G \times \z_n$ and $y$ does not have the same color as $x$ or $z$. This is a contradiction since this arithmetic progression is rainbow.

Therefore, $P_{2g}$ must contain every color in $c(P_0)$ and $c(P_g)$, which is a contradiction to the maximality of $c(P_0)$.
\end{proof}

If $|c(P_g) \setminus c(P_0)| \le 1$ for all $g \in G$, then the following auxiliary coloring of $G$ is well defined:

\[
\overline{c}(g) = \left\{
\begin{array}{lll}
\alpha && \mbox{if } c(P_g) \subset c(P_0),\\
c(P_g) \char92 c(P_0) && \mbox{otherwise. }\\
\end{array}
\right.
\]

The next lemma goes on to show that if $G \times \z_n$ does not contain a rainbow 3-AP, then $\overline{c}$ can not create a rainbow 3-AP in $G$.

\begin{lemma} \label{aux_coloring}
If $\overline{c}$ contains a rainbow 3-AP in $G$, then there exists a rainbow 3-AP in $G \times \z_n$.
\end{lemma}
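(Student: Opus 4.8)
The plan is to take a rainbow 3-AP in $G$ under the auxiliary coloring $\overline{c}$ and lift it to a rainbow 3-AP in $G \times \z_n$. Suppose $\{x_1, y_1, z_1\}$ is a rainbow 3-AP in $G$ under $\overline{c}$, so $\overline{c}(x_1)$, $\overline{c}(y_1)$, $\overline{c}(z_1)$ are three distinct colors. By the definition of $\overline{c}$, at most one of these three can equal $\alpha$ (the symbol assigned when $c(P_g) \subseteq c(P_0)$); for the other two values $g$, the color $\overline{c}(g)$ is the genuine color $c(P_g) \setminus c(P_0)$ witnessed by an actual element of $P_g$. The goal is to choose, for each of $x_1, y_1, z_1$, a second coordinate in $\z_n$ so that the three resulting elements of $G \times \z_n$ (i) form a 3-AP, using the observation recorded before Lemma \ref{1_color_difference} that any $x_2, z_2 \in \z_n$ determine a unique middle coordinate $y_2$ with $x_2 + z_2 = 2y_2$, and (ii) receive the three distinct colors $\overline{c}(x_1), \overline{c}(y_1), \overline{c}(z_1)$.

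First I would handle the two positions whose auxiliary color is not $\alpha$. For each such $g \in \{x_1, y_1, z_1\}$, pick the element $(g, t) \in P_g$ whose color is $\overline{c}(g) = c(P_g) \setminus c(P_0)$; this color lies outside $c(P_0)$ and in particular differs from the colors we will use at the remaining position. For the position whose auxiliary color is $\alpha$ (if there is one), I would exploit the freedom in the second coordinate: the AP constraint $x_2 + z_2 = 2y_2$ fixes only one of the three second coordinates once the other two are chosen, so I can still freely choose the second coordinate at the $\alpha$-position across all of $\z_n$. Since $P_g$ for that position contains every color of $c(P_0)$ except possibly one, and $|c(P_g)| \le |c(P_0)|$, there are ample colors available in $P_g$; I would select an element realizing a color of $c(P_0)$ distinct from the two already-chosen colors. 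A short case analysis splits on which of the three positions (endpoints versus the middle) carries the $\alpha$-color, because the AP relation treats the middle coordinate as determined by the two endpoints.

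The main obstacle is the \textbf{simultaneous} satisfaction of the arithmetic-progression relation and the rainbow condition. Choosing colors freely in each $P_g$ is easy; the difficulty is that once two of the three second coordinates are fixed, the third is forced, so I cannot independently choose all three colors. The resolution is to arrange the order of choices so that the forced coordinate falls on the position where I have the most color freedom, which is exactly the $\alpha$-position (where $P_g$ inherits nearly all of $c(P_0)$) rather than a non-$\alpha$ position (where I need one specific element). When none of the three auxiliary colors is $\alpha$, every position already has a designated element outside $c(P_0)$, and these elements carry pairwise distinct colors automatically, so any choice satisfying the AP relation on the forced coordinate works — but I must still verify that the forced element indeed carries the intended distinct color, which is where the careful bookkeeping on $c(P_0)$ versus $c(P_g)$ from Lemma \ref{1_color_difference} is needed. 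I would organize the argument as this case split and, within each case, exhibit the explicit second coordinates and confirm the three colors are distinct.
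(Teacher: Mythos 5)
Your overall strategy --- lift the rainbow 3-AP from $G$ by picking witnesses for the two non-$\alpha$ colors and letting the AP relation force the third element --- is the same as the paper's, but the step where you handle the forced position contains a genuine error. You claim that at the $\alpha$-position you ``can still freely choose the second coordinate'': you cannot. Once the witnesses at the two non-$\alpha$ positions are fixed, the remaining second coordinate is determined by $x_2+z_2=2y_2$ (this is exactly the difficulty you yourself identify later), so there is no room to ``select an element realizing a color of $c(P_0)$.'' Worse, the fact you invoke to guarantee such a selection --- that the fiber at the $\alpha$-position ``contains every color of $c(P_0)$ except possibly one'' --- reverses Lemma \ref{1_color_difference}. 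That lemma says $|c(P_g)\setminus c(P_0)|\le 1$, i.e.\ the palette of $P_g$ is almost contained in $c(P_0)$; it does \emph{not} say that $c(P_0)$ is almost contained in $c(P_g)$. Indeed $P_g$ could be monochromatic, so there is no ``ample color freedom'' at the forced position.

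The reason the construction works --- and what the paper's proof actually uses --- is that no choice at the forced position is needed at all. Say the two chosen colors are $\beta=\overline{c}(g_1)$ and $\gamma=\overline{c}(g_2)$; both lie outside $c(P_0)$. The forced element lies in the fiber $P_{g_3}$ of the third position, and by the definition of $\overline{c}$ (via Lemma \ref{1_color_difference}) its palette satisfies $c(P_{g_3})\subseteq c(P_0)\cup\{\overline{c}(g_3)\}$, whether $\overline{c}(g_3)=\alpha$ or not. Since $\beta,\gamma\notin c(P_0)$ and $\beta,\gamma\neq\overline{c}(g_3)$ (the auxiliary AP is rainbow), whatever color the forced element happens to carry is automatically distinct from $\beta$ and $\gamma$, so the lifted 3-AP is rainbow. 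This observation also repairs your no-$\alpha$ case, where you suggest verifying that the forced element ``carries the intended distinct color'' $\overline{c}(g_3)$: it need not, and generally will not, be the designated element of its fiber; it only needs to avoid $\beta$ and $\gamma$, which the containment above gives for free. Finally, in the case where the forced position is the middle term, solving $2y_2=x_2+z_2$ requires that $2$ be invertible modulo $n$, i.e.\ that $n$ is odd --- this hypothesis should be cited explicitly, as the paper's case split does.
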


\begin{proof}
Let $\{a, a+d, a+2d\}$ be a rainbow arithmetic progression colored by $\overline{c}$ in $G$. Without loss of generality, there are two cases to consider: $\overline{c}(a+d) \neq \alpha$ and $\overline{c}(a+d) = \alpha$.

If $\overline{c}(a+d) = \beta$ and $\overline{c}(a+2d) = \gamma$, then there exists an $x \in P_a$, $y \in P_{a+d}$, and $z \in P_{a + 2d}$ such that $\{ x, y, z \}$ is a 3-AP in $G \times \z_n$, $c(y) = \beta$, and $c(z) = \gamma$. However, $\overline{c}(a) \neq \beta, \gamma$, which implies $\beta, \gamma \notin P_a$, so $c(x) \neq \beta, \gamma$. This implies that $\{x,y,z\}$ is a rainbow arithmetic progression in $G \times \z_n$, which is a contradiction.

If $\overline{c}(a) = \beta$, $\overline{c}(a+d) = \alpha$, and $\overline{c}(a+2d) = \gamma$, then there exists an $x \in P_a$, $y \in P_{a+d}$, and $z \in P_{a + 2d}$ such that $\{ x, y, z \}$ is a 3-AP in $G \times \z_n$, $c(x) = \beta$ and $c(z) = \gamma$. However, $c(y) \neq \beta, \gamma$ because $\overline{c}(a+d) = \alpha$. This implies that $\{x,y,z\}$ is a rainbow arithmetic progression in $G \times \z_n$, which is a contradiction.
\end{proof}

\begin{theorem}\label{aw_G_Zp_equality}
If $G$ is a finite abelian group and $n$ is an odd positive integer, then $$\aw(G \times \z_n, 3) =  \aw(G,3) + \aw(\z_n,3) - 2.$$
\end{theorem}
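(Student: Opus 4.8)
The plan is to prove the equality by establishing both inequalities separately, each of which combines the machinery already developed in this section. The lower bound $\aw(G,3) + \aw(\z_n,3) - 2 \le \aw(G \times \z_n, 3)$ is essentially Proposition \ref{aw_awu_lowerbound}, which exhibits an explicit coloring of $G \times \z_n$ with $\aw(G,3) + \awu(\z_n,3) - 2$ colors and no rainbow 3-AP; since $\awu(\z_n,3) = \aw(\z_n,3)$ by the earlier proposition, this yields exactly the claimed lower bound. So the real work is the upper bound $\aw(G \times \z_n, 3) \le \aw(G,3) + \aw(\z_n,3) - 2$, which I would phrase contrapositively: I assume $c$ is an exact coloring of $G \times \z_n$ using at least $\aw(G,3) + \aw(\z_n,3) - 2$ colors and with no rainbow 3-AP, and I derive a contradiction.

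The engine for the upper bound is the auxiliary coloring $\overline{c}$ of $G$ together with Lemmas \ref{1_color_difference} and \ref{aux_coloring}. First I would invoke Lemma \ref{1_color_difference} to conclude that $|c(P_g) \setminus c(P_0)| \le 1$ for every $g \in G$, which guarantees that $\overline{c}$ is well defined. The key counting step is then to bound the total number of colors used by $c$ in terms of $|c(P_0)|$ and the number of colors appearing in $\overline{c}$. Concretely, every color used by $c$ is either a color of $c(P_0)$ or is a ``new'' color appearing in some fiber $P_g$ as the single element of $c(P_g) \setminus c(P_0)$; the latter colors are exactly the non-$\alpha$ values of $\overline{c}$. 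This gives a relation of the shape $|c(G \times \z_n)| \le |c(P_0)| + (|\overline{c}(G)| - 1)$, where the $-1$ accounts for the placeholder color $\alpha$. Since $c$ restricted to $P_0 \cong \z_n$ has no rainbow 3-AP, we have $|c(P_0)| \le \aw(\z_n,3) - 1$, and since $\overline{c}$ has no rainbow 3-AP in $G$ (by Lemma \ref{aux_coloring}, a rainbow 3-AP of $\overline{c}$ would lift to one of $c$), we have $|\overline{c}(G)| \le \aw(G,3) - 1$. Combining these bounds forces $|c(G \times \z_n)| \le \aw(G,3) + \aw(\z_n,3) - 3$, contradicting the assumption that $c$ uses at least $\aw(G,3) + \aw(\z_n,3) - 2$ colors.

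The step I expect to be the main obstacle is making the counting inequality $|c(G \times \z_n)| \le |c(P_0)| + |\overline{c}(G)| - 1$ fully rigorous, because it requires checking that every color of the whole group is accounted for and that no color is double-counted. The delicate point is whether $\alpha$ (the placeholder) genuinely corresponds to a color of $c(P_0)$ and whether the ``new'' colors across different fibers are distinct or could coincide; I would argue that each new color $c(P_g) \setminus c(P_0)$ is a single color not in $c(P_0)$, and that the set of all such new colors, together with $c(P_0)$, exhausts $c(G \times \z_n)$, so the total count is at most $|c(P_0)|$ plus the number of distinct non-$\alpha$ values of $\overline{c}$. A secondary subtlety is the normalization ``without loss of generality $|c(P_g)| \le |c(P_0)|$'' used to set up $\overline{c}$; I would make sure the maximality of $c(P_0)$ is consistently invoked, exactly as in the proof of Lemma \ref{1_color_difference}. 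Once the counting inequality is secured, the two extremal bounds on $|c(P_0)|$ and $|\overline{c}(G)|$ close the argument immediately.
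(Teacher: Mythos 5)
Your proposal is correct and takes essentially the same route as the paper: the lower bound from Proposition \ref{aw_awu_lowerbound}, and the upper bound by partitioning into fibers $P_g$, invoking Lemma \ref{1_color_difference} to define $\overline{c}$, bounding $|c(P_0)| \le \aw(\z_n,3)-1$ and $|\overline{c}(G)| \le \aw(G,3)-1$ (the latter via Lemma \ref{aux_coloring}), and closing with the counting inequality $|c(G \times \z_n)| \le |c(P_0)| + |\overline{c}(G)| - 1$. If anything, your write-up completes the contradiction more cleanly than the paper's own proof, whose final sentence is garbled, and the subtleties you flag (that $\overline{c}(0) = \alpha$ so the placeholder is always attained, and that coinciding new colors across fibers only help the upper bound) are exactly the right points to make the count rigorous.
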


\begin{proof}
The lower bound for $\aw(G \times \z_n, 3)$ is by Proposition \ref{aw_awu_lowerbound}. For the upper bound, it suffices to show $\aw(G \times \z_n, 3) \le  \aw(G,3) + \awu(\z_n,3) - 2.$ Let $c$ be a coloring of $G \times \z_n$ with $ \aw(G,3) + \awu(\z_n,3) - 2$ colors and no rainbow 3-APs. The statement will be proved by contradiction, showing that no such coloring exists.

For each $g \in G$, let $P_g = \{ (g, h): h \in \z_n \}$. Without loss of generality, let $|c(P_g)| \le |c(P_0)|$ for all $g \in G$. Since there are no rainbow 3-APs and $P_0$ is isomorphic to $\z_n$, $|c(P_0)| \le aw(\z_n)-1$. Also, by Lemma \ref{1_color_difference}, $|c(P_g) \char92 c(P_0)| \le 1$, for all $g \in G$. Define a coloring of $G$ as follows:

\[
\overline{c}(g) = \left\{
\begin{array}{lll}
\alpha && \mbox{if } c(P_g) \subset c(P_0),\\
c(P_g) \char92 c(P_0) && \mbox{otherwise. }\\
\end{array}
\right.
\]

The total number of colors used by $c$ is $|c(P_0)| + |\overline{c}(G)| - 1 \le (\aw(\z_n,3) -1) + (\aw(G,3)-1) -1$. Therefore $|c(P_0)| \le \aw(\z_n,3)$ or $|\overline{c}(G)| \le \aw(G,3)$

\end{proof}

This leads to the following corollary which implies that for positive odd integers $m$ and $n$, $\aw(\z_m \times \z_n,3) = \aw(\z_{mn},3)$.

\begin{corollary}\label{awformula}
Let $n$ be the largest odd divisor of the order of $G$. There exists a finite abelian group $G'$ such that the order of $G'$ is a power of $2$ and $$\aw(G,3) = \aw(G',3) + \aw(\z_n,3) -2.$$ 
\end{corollary}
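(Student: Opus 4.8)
The plan is to reduce everything to the cyclic case by peeling off the odd cyclic factors of $G$ one at a time with Theorem \ref{aw_G_Zp_equality}, and then to reconcile the resulting sum with $\aw(\z_n,3)$ via the reduction formula of Theorem \ref{awequals}.

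First I would invoke the fundamental theorem of finite abelian groups to write $G \cong G' \times \z_{m_1} \times \cdots \times \z_{m_k}$, where $G'$ is the Sylow $2$-subgroup of $G$ (so $|G'|$ is a power of $2$) and each $m_i$ is a power of an odd prime. The product $\z_{m_1} \times \cdots \times \z_{m_k}$ is then the odd part of $G$, whose order equals the largest odd divisor $n = m_1 m_2 \cdots m_k$ of $|G|$. This $G'$ is the group whose existence the corollary asserts. Next I would apply Theorem \ref{aw_G_Zp_equality} repeatedly, stripping off one factor $\z_{m_i}$ at each step. Since each intermediate group $G' \times \z_{m_1} \times \cdots \times \z_{m_{i-1}}$ is a finite abelian group and each $m_i$ is odd, the hypotheses are met, and a telescoping argument gives
$$\aw(G,3) = \aw(G',3) + \sum_{i=1}^{k}\bigl(\aw(\z_{m_i},3)-2\bigr).$$
It then remains to show that this sum equals $\aw(\z_n,3)-2$.

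The main obstacle is precisely this last identification, and it is here that the non-cyclicity of the odd part matters: the factors $\z_{m_1}, \ldots, \z_{m_k}$ may involve repeated primes, so their product is in general not isomorphic to $\z_n$, and Theorem \ref{aw_G_Zp_equality} cannot be applied with the whole odd part playing the role of a single cyclic group of order $n$. To get around this I would evaluate each term using Theorem \ref{awequals}: for a prime power $m_i = p^{e}$ one has $\aw(\z_{p^e},3)-2 = c_p\, e$, where $c_p = 1$ if $\aw(\z_p,3)=3$ and $c_p = 2$ if $\aw(\z_p,3)=4$. Summing over $i$ and grouping the terms by their underlying prime converts $\sum_i c_{p_i} e_i$ into $\sum_{p \mid n} c_p\, v_p(n)$, where $v_p(n)$ denotes the exponent of $p$ in $n$; this is exactly the quantity $\aw(\z_n,3)-2$ produced by Theorem \ref{awequals} (with $e_0=0$, since $n$ is odd).

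Substituting back into the telescoped identity yields $\aw(G,3) = \aw(G',3) + \aw(\z_n,3) - 2$, completing the argument. I expect the only delicate point to be the bookkeeping in the grouping step, where one must be careful that the prime-power decomposition of the odd part (with possible repetitions of a prime) aggregates to the same valuation data as the single cyclic group $\z_n$; linearity of $\aw(\z_{p^{\,\cdot}},3)-2$ in the exponent, guaranteed by Theorem \ref{awequals}, is what makes the two bookkeepings agree.
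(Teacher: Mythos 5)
Your proof is correct and follows essentially the same route as the paper: both decompose $G$ into its Sylow $2$-subgroup $G'$ and odd prime-power cyclic factors, strip those factors by repeated application of Theorem \ref{aw_G_Zp_equality}, and then use the linearity in the exponent from Theorem \ref{awequals} to identify the resulting sum with $\aw(\z_n,3)-2$. The only cosmetic difference is that the paper packages your regrouping step as the intermediate observation that any two finite abelian groups of the same odd order have the same anti-van der Waerden number, which is your bookkeeping argument in disguise.
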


\begin{proof}
For each odd prime $p$ and positive integer $e$, $\aw(\z_{p^e},3)= e(\aw(\z_p,3) -2) +2$, by Theorem \ref{awequals}. Theorem \ref{aw_G_Zp_equality} implies $\aw(\z_{p^{e_1}} \times \z_{p^{e_2}} \cdots \z_{p^{e_{\ell}}},3) =  2+ (\aw(\z_{p},3) - 2) \displaystyle\sum_{i=1}^{\ell} e_i.$ So the anti-van der Waerden number is the same for any two finite abelian groups having the same odd order.

Now let $G = G' \times \z_n$ and $n = \displaystyle\prod_{i=1}^{\ell} p_i^{e_i}$, where $p_i$ is an odd prime for all $i$, where $1 \le i \le \ell$. Then
\begin{eqnarray*}
\aw(G,3) &=& \aw(G' \times \z_n,3)\\
&=& \aw(G',3) + \displaystyle\sum_{i=1}^{\ell} e_i(\aw(\z_{p_i},3) - 2)\\ 
&=& \aw(G',3) + \aw(\z_n,3) -2.
\end{eqnarray*}
\end{proof}

\subsection{Groups with power of 2 order}
In order to completely use Corollary \ref{awformula} the anti-van der Waerden number of groups with order that is a power of 2 must be determined.

\begin{proposition}\label{times_2}
For any finite abelian group $G$, $$\aw(G \times \z_2, 3) \le 2\aw(G,3) - 1.$$
\end{proposition}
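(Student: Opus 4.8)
The plan is to prove the contrapositive-flavored counting statement: every coloring of $G \times \z_2$ with no rainbow 3-AP uses at most $2\aw(G,3)-2$ colors. Once that is established, no exact $(2\aw(G,3)-1)$-coloring can be rainbow-free, and the claimed bound $\aw(G \times \z_2, 3) \le 2\aw(G,3)-1$ follows directly from the definition of $\aw$.

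First I would exploit the product structure by partitioning $G \times \z_2$ into the two layers $P_0 = G \times \{0\}$ and $P_1 = G \times \{1\}$, each in natural bijection with $G$ via $g \mapsto (g,i)$. The key observation concerns how a 3-AP sits relative to this partition. In a 3-AP $\{(a,x),(a+d,x+d_2),(a+2d,x+2d_2)\}$ the second coordinates read $x,\ x+d_2,\ x$, because $2d_2 = 0$ in $\z_2$. Hence a 3-AP either lies entirely in one layer (when $d_2 = 0$) or has its two endpoints in one layer and its midpoint in the other (when $d_2 = 1$).

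Next I would use only the 3-APs internal to a single layer. For $d_2 = 0$ the set $\{(a,i),(a+d,i),(a+2d,i)\}$ is a 3-AP of $P_i$ exactly when $\{a,a+d,a+2d\}$ is a 3-AP of $G$, so the restriction $c|_{P_i}$, transported to $G$ by the bijection above, is a coloring of $G$ with no rainbow 3-AP. By the definition of $\aw(G,3)$, this restriction uses at most $\aw(G,3)-1$ colors. Since every color of $c$ appears in $P_0$ or in $P_1$, I obtain
$$|c(G \times \z_2)| \le |c(P_0)| + |c(P_1)| \le 2\bigl(\aw(G,3)-1\bigr) = 2\aw(G,3)-2,$$
which completes the argument.

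There is no serious obstacle here; the only point requiring care is the verification that the 3-APs internal to a layer are precisely the images of the 3-APs of $G$, so that each restriction is genuinely rainbow-free as a coloring of $G$. It is worth noting that the crossing 3-APs (those with $d_2 = 1$) play no role in this upper bound, which suggests the estimate need not be tight and leaves room for sharper results elsewhere in the section.
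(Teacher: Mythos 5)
Your proof is correct and is essentially the paper's own argument: both partition $G \times \z_2$ into the two layers $G \times \{0\}$ and $G \times \{1\}$, note each is isomorphic to $G$, and apply a pigeonhole count of colors (the paper states it directly for an exact $(2\aw(G,3)-1)$-coloring, you state the contrapositive that a rainbow-free coloring uses at most $2\aw(G,3)-2$ colors). Your extra verification that within-layer 3-APs correspond exactly to 3-APs of $G$ is a nice touch of rigor but does not change the approach.
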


\begin{proof}
Let $A = \{ (g,0) : g \in G\}$ and $B = \{ (g,1) : g \in G \}.$ In any exact $( 2aw(G,3) - 1)$-coloring of $G \times \z_2$, either $A$ or $B$ will have at least $\aw(G,3)$ colors. Therefore, a rainbow 3-AP will exist since $A$ and $B$ are both isomorphic to $G$.
\end{proof}

An inductive argument, using Proposition \ref{times_2} as the base case, gives the following corollary.

\begin{corollary}\label{2_to_the_s}
For all positive integers $s$, $$\aw( [\z_2]^s, 3) \le 2^s + 1.$$
\end{corollary}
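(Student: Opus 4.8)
The plan is to prove the bound by induction on $s$, using the recursive inequality of Proposition \ref{times_2} to drive the inductive step and handling the smallest value of $s$ separately. The guiding observation is that $[\z_2]^{s+1}$ is isomorphic to $[\z_2]^s \times \z_2$, so Proposition \ref{times_2} applies with $G = [\z_2]^s$ and converts an upper bound for the $s$-fold product directly into one for the $(s+1)$-fold product.

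First I would establish the base case $s = 1$. Here $[\z_2]^1 = \z_2$ has only two elements, so every $3$-AP $\{a, a+d, a+2d\}$ satisfies $2d = 0$ and hence $a + 2d = a$; every such progression is therefore degenerate and cannot be rainbow. By the convention adopted in the introduction, $\aw(\z_2, 3) = |\z_2| + 1 = 3 = 2^1 + 1$, which is exactly the claimed bound.

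Next I would carry out the inductive step. Assuming $\aw([\z_2]^s, 3) \le 2^s + 1$ for some $s \ge 1$, I would apply Proposition \ref{times_2} with $G = [\z_2]^s$, using isomorphism-invariance of the anti-van der Waerden number, to obtain
$$\aw([\z_2]^{s+1}, 3) = \aw([\z_2]^s \times \z_2, 3) \le 2\,\aw([\z_2]^s, 3) - 1.$$
Substituting the inductive hypothesis yields $2(2^s + 1) - 1 = 2^{s+1} + 1$, which completes the step and hence the induction.

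Since the substantive work is already packaged inside Proposition \ref{times_2}, I do not anticipate a genuine obstacle: the argument reduces to a one-line recursion once the base case is interpreted correctly. The only point requiring care is precisely that base case, where one must invoke the degenerate-AP convention rather than Proposition \ref{times_2} itself, since $\z_2$ admits no non-degenerate $3$-AP. (One could instead anchor the induction at $s = 2$ by computing $\aw([\z_2]^2, 3) \le 2\,\aw(\z_2,3) - 1 = 5$ directly from Proposition \ref{times_2}, but the value $\aw(\z_2,3) = 3$ must still be supplied from the convention.)
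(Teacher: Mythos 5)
Your proof is correct and takes essentially the same approach as the paper: the paper obtains this corollary exactly as you do, by induction on $s$ driven by Proposition \ref{times_2}, leaving the details implicit. Your explicit handling of the base case, where $\aw(\z_2,3)=3$ must come from the degenerate-AP convention rather than from Proposition \ref{times_2}, fills in precisely what the paper leaves unstated.
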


\begin{theorem}\label{2_to_the_m_s}
For $1 \le i \le s$, let $m_i$ be a positive integer. Then $$aw( \z_{2^{m_1}} \times \z_{2^{m_2}} \times \cdots \times \z_{2^{m_s}}, 3) = 2^s + 1.$$
\end{theorem}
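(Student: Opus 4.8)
The plan is to prove both inequalities separately: the lower bound comes from one explicit coloring, and the upper bound from an induction that peels off a single cyclic factor at a time. Throughout write $G = \z_{2^{m_1}} \times \cdots \times \z_{2^{m_s}}$, and observe that $2G \cong \z_{2^{m_1-1}} \times \cdots \times \z_{2^{m_s-1}}$ has index $[G:2G] = 2^s$, so that $G/2G \cong [\z_2]^s$.

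For the lower bound I would color each element of $G$ by the coset of $2G$ containing it, using exactly $2^s$ colors. Any $3$-AP $\{a, a+d, a+2d\}$ has $a$ and $a+2d$ in a common coset of $2G$, since $2d \in 2G$; hence these two terms share a color and the progression is not rainbow. This produces an exact $2^s$-coloring with no rainbow $3$-AP, giving $\aw(G,3) \ge 2^s + 1$. (Equivalently, it is the pullback along $G \twoheadrightarrow G/2G \cong [\z_2]^s$ of the injective coloring of $[\z_2]^s$, which has no rainbow $3$-AP because $[\z_2]^s$ has no non-degenerate $3$-AP at all.)

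For the upper bound I would induct on $s$ (equivalently on $|G|$). The base case $s=1$ is exactly Theorem \ref{2m}, $\aw(\z_{2^m},3)=3=2^1+1$. For the step it suffices to establish the following generalization of Proposition \ref{times_2}: for every finite abelian $2$-group $H$ and every $m \ge 1$, $\aw(H \times \z_{2^m},3) \le 2\aw(H,3)-1$. Applying this with $H = \z_{2^{m_1}} \times \cdots \times \z_{2^{m_{s-1}}}$, where $\aw(H,3)=2^{s-1}+1$ by the inductive hypothesis, yields $\aw(G,3) \le 2(2^{s-1}+1)-1 = 2^s+1$, matching the lower bound. I would prove the generalization by a second induction, on $m$, whose base case $m=1$ is Proposition \ref{times_2}. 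For $m \ge 2$, partition $H \times \z_{2^m}$ into its even part $A = H \times 2\z_{2^m}$ and odd part $B = H \times (1+2\z_{2^m})$. The map $(h,2a)\mapsto(h,a)$ identifies $A$, as a $3$-AP structure, with $H \times \z_{2^{m-1}}$, and $(h,2b+1)\mapsto(h,b)$ does the same for $B$; so by the inner hypothesis each of $A$ and $B$ uses at most $2\aw(H,3)-2$ colors in the absence of an internal rainbow $3$-AP.

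The crux, and the step I expect to be the main obstacle, is to rule out that $A$ and $B$ together realize $2\aw(H,3)-1$ colors with no rainbow $3$-AP anywhere. The coset coloring shows the two halves may legitimately carry disjoint palettes, so color counting alone is insufficient; the argument must exploit the $3$-APs that cross between $A$ and $B$. Because the last-coordinate difference $\delta$ of a crossing progression is odd, its two extreme terms lie in one half and its middle term in the other, since stepping twice by $\delta$ restores the original parity. Assuming $c(A)$ and $c(B)$ are each proper subsets of the full palette, I would fix $\alpha \in c(A)\setminus c(B)$ and $\gamma \in c(B)\setminus c(A)$ and try to assemble a rainbow crossing progression from a point of color $\alpha$, a point of color $\gamma$, and a third point whose color is forced to differ from both. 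Here is the genuine difficulty: unlike the odd-modulus setting of Lemma \ref{1_color_difference}, where $P_0,P_g,P_{2g}$ are three distinct cosets and midpoints always exist, in a $2$-group the two extreme terms share a coset and a prescribed pair of endpoints has a midpoint only when their sum lies in $2(H \times \z_{2^m})$. Controlling this divisibility obstruction, and showing that enough crossing progressions survive to force a rainbow one whenever the palette exceeds $2\aw(H,3)-2$, is the technical heart of the argument.
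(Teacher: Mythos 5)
Your lower bound is sound and is exactly the paper's: coloring by cosets of $2G$ (the paper writes it coordinatewise as $c(x)=x \bmod 2$) uses $2^s$ colors and kills every $3$-AP because $a$ and $a+2d$ lie in the same coset. The upper bound, however, has a genuine gap, and you flag it yourself: the ``generalization of Proposition \ref{times_2}'' that your outer induction rests on, namely $\aw(H\times \z_{2^m},3)\le 2\aw(H,3)-1$ for every abelian $2$-group $H$, is never proved. Note that for $2$-groups this inequality is not a stepping stone but \emph{is} the theorem: it holds with equality, since $2(2^t+1)-1=2^{t+1}+1$. So deferring it to an inner induction on $m$ whose inductive step is left open reduces the problem to itself. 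Counting cannot close that step: with the even/odd partition $A\cup B$ you only get $|c(A)|,|c(B)|\le 2\aw(H,3)-2$, and (as you correctly observe) the extremal coset coloring shows the two halves can legitimately carry disjoint palettes, so a rainbow \emph{crossing} progression must be produced by structural means --- which is the entire content of the result.

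The paper closes exactly this gap with three ingredients your sketch lacks. First, it partitions $G$ into all $2^{m_s}$ fibers $P_i$ over the last coordinate, not merely two halves. Second, choosing $x_\alpha$ of color $\alpha\in c(A)\setminus c(B)$ in $P_0$ and $x_\beta$ of color $\beta\in c(B)\setminus c(A)$ in $P_j$ ($j$ odd), it propagates along the progression $(i+1)x_\beta-ix_\alpha$: each consecutive triple is a $3$-AP whose third term is \emph{computed from} the previous two as $2y-x$, so the midpoint/divisibility obstruction you identify as the technical heart simply never arises; since $j$ is odd it generates $\z_{2^{m_s}}$, and the alternation forces $\alpha$ into every even fiber and $\beta$ into every odd fiber. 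Third --- and this is the step your two-block partition cannot even formulate --- the induction hypothesis on $s-1$ factors bounds each fiber by $2^{s-1}$ colors, so some color $\gamma$ avoids $c(P_0)\cup c(P_j)$; the paper then defines an auxiliary exact $3$-coloring of the base $\z_{2^{m_s}}$ with colors $\alpha,\beta,\gamma$ and invokes Theorem \ref{2m} ($\aw(\z_{2^{m_s}},3)=3$) to obtain a rainbow $3$-AP downstairs, which lifts to a rainbow $3$-AP upstairs (parity of the fibers rules out the one bad configuration). Without an analogue of the third color $\gamma$ and of the appeal to Theorem \ref{2m} on the base, your inner induction step cannot get started, so as written the proposal does not establish the upper bound.
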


\begin{proof}

Let $m_1 \le m_2 \le \ldots \le m_s$ and $x = (x_1, x_2, \ldots, x_s) \in  \z_{2^{m_1}} \times \z_{2^{m_2}} \times \cdots \times \z_{2^{m_s}}$. Define $c(x) = (x_1, x_2, \ldots, x_s) \mod 2$. The function $c$ is an exact $2^s$-coloring of $\z_{2^{m_1}} \times \z_{2^{m_2}} \times \cdots \times \z_{2^{m_s}}$. Since $c(x) = c(x+2d)$, for any $d \in  \z_{2^{m_1}} \times \z_{2^{m_2}} \times \cdots \times \z_{2^{m_s}}$, this coloring does not contain any rainbow arithmetic progressions. Therefore, $2^s + 1 \le aw( \z_{2^{m_1}} \times \z_{2^{m_2}} \times \cdots \times \z_{2^{m_s}}, 3).$

The proof of the upper bound is inductive on $(s,m_s)$. The base case of $(1,m)$ is true for all positive integers $m$ by Theorem \ref{2m} and the base case of $(s,1)$ is true for all positive integers $s$ by Corollary \ref{2_to_the_s}. Assume the statement is true for parameters $(s', m)$ for all $1 \le s' < s$ and $1 \le m < m_s$.

It will be shown that the statement is true for parameters $(s,m_s)$ by assuming there exists a coloring of $\z_{2^{m_1}} \times \z_{2^{m_2}} \times \cdots \times \z_{2^{m_s}}$ with exactly $2^s + 1$ colors and no rainbow 3-AP, then arriving at a contradiction.

For each $i \in \z_{2^{m_s}}$, let $P_i = \{ (x , i) : x \in \z_{2^{m_1}} \times \z_{2^{m_2}} \times \cdots \times \z_{2^{m_{(s-1)}}} \}$. So $P_i$ is isomorphic to $\z_{2^{m_1}} \times \z_{2^{m_2}} \times \cdots \times \z_{2^{m_{(s-1)}}}$ for all $i$. Let $A$ be the set of $P_i$ with $i$ even, and $B$ be the set of $P_i$ with $i$ odd. So $A$ and $B$ are both isomorphic to $\z_{2^{m_1}} \times \z_{2^{m_2}} \times \cdots \times \z_{2^{(m_s - 1)}}$. By the induction hypothesis, $A$ and $B$ both have at most $2^s$ colors. So there exists $\alpha \in c(A)\char92 c(B)$ and $\beta \in c(B) \char92 c(A)$.

Assume without loss of generality, $x_{\alpha} := c^{-1}(\alpha) \in P_0$ and $x_{\beta} := c^{-1}(\beta) \in P_j$, where $j$ is odd. Then $\{x_{\alpha}, x_{\beta}, 2x_{\beta} - x_{\alpha}\}$ is a 3-AP in $\z_{2^{m_1}} \times \z_{2^{m_2}} \times \cdots \times \z_{2^{m_s}}$. So $2x_{\beta} - x_{\alpha} \in P_{2j}$. Since there are no rainbow 3-APs $c( 2x_{\beta} - x_{\alpha} )$ must be $\alpha$. 

Similarly, $\{ (i-1)x_{\beta} - (i-2)x_{\alpha}, ix_{\beta} - (i-1)x_{\alpha}, (i+1)x_{\beta} - ix_{\alpha} \}$ is a 3-AP for all $i$ and $c((i-1)x_{\beta} - (i-2)x_{\alpha})$ must be equal to $c((i+1)x_{\beta} - ix_{\alpha})$ if it is not rainbow. This implies that $\alpha \in c(P_i)$ for all even $i$, and $\beta \in c(P_i)$, for all odd $i$.

By the induction hypothesis, $P_i$ has at most $2^{s-1}$ colors for all $i$. Therefore, $|c(P_0) \cup c(P_j)| \le 2^s$. So there exists a color $\gamma$ that is not in $c(P_0)$ or $c(P_j)$. Now define an exact 3-coloring of $\z_{2^{m_s}}$ as follows:

\[
\overline{c}(i) = \left\{
\begin{array}{lll}
\alpha && \mbox{if } \alpha \in c(P_i) \mbox{ and } \gamma \notin c(P_i),\\
\beta  && \mbox{if } \beta \in c(P_i) \mbox{ and } \gamma \notin c(P_i),\\
\gamma && \mbox{if } \gamma \in c(P_i).
\end{array}
\right.
\]

The coloring $\overline{c}$ is an exact 3-coloring and creates a rainbow 3-AP in $\z_{2^{m_s}}$ by Theorem \ref{2m}. Let $\{a, a+d, a+2d\}$ be such a rainbow arithmetic progression. Without loss of generality, there are two cases to consider: $\overline{c}(a+d) \neq \gamma$ and $\overline{c}(a+d) = \gamma$.

If $\overline{c}(a) = \alpha$, $\overline{c}(a+d) = \gamma$, and $\overline{c}(a+2d) = \beta$, then $a$ must be even and $a+2d$ must be odd, which is a contradiction.

If $\overline{c}(a) = \alpha$, $\overline{c}(a+d) = \beta$, and $\overline{c}(a+2d) = \gamma$, then there exists an $x \in P_a$, $y \in P_{a+d}$, and $z \in P_{a + 2d}$ such that $\{ x, y, z \}$ is a 3-AP in $\z_{2^{m_1}} \times \z_{2^{m_2}} \times \cdots \times \z_{2^{m_s}}$, $c(y) = \beta$ and $c(z) = \gamma$. However, $c(x) \neq \beta$ or $\gamma$ because $\overline{c}(a) = \alpha$. This implies that $\{x,y,z\}$ is a rainbow arithmetic progression in $\z_{2^{m_1}} \times \z_{2^{m_2}} \times \cdots \times \z_{2^{m_s}}$, which is a contradiction.

Therefore, $aw( \z_{2^{m_1}} \times \z_{2^{m_2}} \times \cdots \times \z_{2^{m_s}}, 3) \le 2^s + 1.$
\end{proof}

\section{Unitary anti-van der Waerden Numbers} \label{sec:awu}

\begin{proposition} \label{awu_lowerbound}
For all positive integers $p$ and $q$, $$\awu(\z_p,3) + \awu(\z_q,3)-2 \le \awu(\z_p \times \z_q, 3).$$
\end{proposition}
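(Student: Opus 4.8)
The plan is to prove the inequality by exhibiting a \emph{unitary} exact coloring of $\z_p \times \z_q$ that uses exactly $\awu(\z_p,3) + \awu(\z_q,3) - 3$ colors and contains no rainbow $3$-AP. Since $\awu(\z_p\times\z_q,3)$ is the least $r$ forcing a rainbow $3$-AP in every unitary exact $r$-coloring, producing such a coloring immediately gives $\awu(\z_p\times\z_q,3) \ge \awu(\z_p,3) + \awu(\z_q,3) - 2$. The construction mirrors the one used above in the proof that $\awu(\z_n,3)=\aw(\z_n,3)$. Fix a unitary coloring $c_p$ of $\z_p$ with $\awu(\z_p,3)-1$ colors and no rainbow $3$-AP, and a unitary coloring $c_q$ of $\z_q$ with $\awu(\z_q,3)-1$ colors and no rainbow $3$-AP, using disjoint palettes and, without loss of generality, with $0$ the uniquely colored element of each. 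Define
\[
c(g,h) = \left\{ \begin{array}{cl} c_p(g) & \mbox{if } h = 0,\\ c_q(h) & \mbox{if } h \neq 0. \end{array} \right.
\]
The $h=0$ slice realizes all $\awu(\z_p,3)-1$ colors of $c_p$, and the rows with $h\neq 0$ realize the $\awu(\z_q,3)-2$ colors that $c_q$ assigns to nonzero elements, for a total of $\awu(\z_p,3)+\awu(\z_q,3)-3$ colors; moreover $(0,0)$ is the only element receiving color $c_p(0)$, so $c$ is unitary.

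Next I would verify that $c$ has no rainbow $3$-AP by sorting each progression $\{(a_1,a_2),(a_1+d_1,a_2+d_2),(a_1+2d_1,a_2+2d_2)\}$ according to the number $k$ of its second coordinates equal to $0$. If $k=3$ then $d_2=0=a_2$ and the progression is colored entirely by $c_p$ along a $3$-AP of $\z_p$, hence not rainbow; if $k=0$ it is colored entirely by $c_q$ along a $3$-AP of $\z_q$, hence not rainbow. The decisive case is $k=1$: exactly one element lies in the $h=0$ slice (receiving a $c_p$-color from the disjoint palette), while the other two receive $c_q$-colors indexed by the two nonzero terms of $\{a_2,a_2+d_2,a_2+2d_2\}$. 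Because $0$ is uniquely colored by $c_q$, those two nonzero terms must get the same $c_q$-color, directly if they coincide and otherwise because $c_q$ has no rainbow $3$-AP and the $0$-term cannot repeat a color; so the progression is not rainbow. This is precisely where the unitary hypothesis on the factor colorings is used.

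A short check shows the only remaining possibility, $k=2$, can occur only when the second coordinates read $(0,q/2,0)$, which forces the \emph{second} factor $\z_q$ to have even order. Whenever at least one of $p,q$ is odd I would place that factor in the role of the second coordinate, using $\z_p\times\z_q\cong\z_q\times\z_p$; then $k=2$ never arises and the cases above finish the argument.

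The main obstacle is the case where both $p$ and $q$ are even, since then no reordering avoids $k=2$: the progression $(a_1,0),(a_1+d_1,q/2),(a_1+2d_1,0)$ is colored $c_p(a_1),\,c_q(q/2),\,c_p(a_1+2d_1)$, which is rainbow as soon as $c_p(a_1)\neq c_p(a_1+2d_1)$, and no unitary coloring of $\z_p$ with the required number of colors is constant on the cosets of $2\z_p$. To treat this case I would abandon the union coloring and instead exploit that, when both factors are even, the first and last terms of every $3$-AP differ by $2(d_1,d_2)\in 2\z_p\times 2\z_q$ and therefore agree modulo $2$ in each coordinate, so that any coloring factoring through $\z_p\times\z_q\to\z_2\times\z_2$ is automatically free of rainbow $3$-APs. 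The difficulty, as the failure of naive single-element recolorings illustrates, is to refine such a parity-based coloring up to $\awu(\z_p,3)+\awu(\z_q,3)-3$ colors while introducing a uniquely colored element without creating a rainbow; I expect this to require a recursive construction on the $2$-adic structure of the factors, paralleling the separate treatment of groups of $2$-power order in Section~\ref{sec:aw}.
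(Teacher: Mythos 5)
Your union coloring is exactly the paper's proof: the paper establishes this proposition by repeating the proof of Proposition \ref{aw_awu_lowerbound} with the coloring of the first factor replaced by a unitary one, i.e., precisely your $c(g,h)=c_p(g)$ for $h=0$ and $c(g,h)=c_q(h)$ for $h\neq 0$, with the same count of $\awu(\z_p,3)+\awu(\z_q,3)-3$ colors and the same use of the unique color of $0$ under $c_q$ to handle progressions meeting the slice $h=0$ exactly once. Your case analysis (sorting by the number $k$ of zero second coordinates) is in fact more careful than the paper's terse justification; note also that the displayed coloring in Proposition \ref{aw_awu_lowerbound} has its two branches swapped and would not work as literally written, so your version is the intended one. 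As for the case that blocked you, where both $p$ and $q$ are even: the paper does not prove it either. The proof of Proposition \ref{aw_awu_lowerbound} explicitly invokes ``since $n$ is odd,'' so the paper's argument, like yours, covers only the situation in which at least one factor is odd (your observation that one may swap the roles of the factors makes this precise), and the proposition is only ever applied, in Theorem \ref{singleton_bound}, with the second factor odd. So you have proved exactly what the paper proves; the even--even case, though true in view of the later results on groups of $2$-power order in Section \ref{sec:awu}, is an overstatement in the proposition as phrased rather than a missing idea on your part.
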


\begin{proof}
This is the same as the proof of Proposition \ref{aw_awu_lowerbound} with $\overline{c}_{\z_p}$ changed to a unitary coloring of $\z_p$ with $\awu(\z_p,3)-1$ colors and no rainbow 3-AP.
\end{proof}

\begin{theorem}\label{singleton_bound}
For any positive odd integers $n$, $$aw_u( G \times \z_{n}, 3) = aw_u(G,3) + aw_u(\z_n, 3) - 2.$$
\end{theorem}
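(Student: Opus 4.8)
The plan is to prove the two matching inequalities separately, carrying the unitary structure through the fiber argument of Theorem \ref{aw_G_Zp_equality}. For the lower bound I would generalize the construction behind Proposition \ref{awu_lowerbound} to an arbitrary $G$. Fix a unitary exact coloring $c_G$ of $G$ with $\awu(G,3)-1$ colors and no rainbow $3$-AP whose unique color sits at $0$, and a unitary exact coloring $c_n$ of $\z_n$ with $\awu(\z_n,3)-1$ colors and no rainbow $3$-AP whose unique color sits at $0$, using disjoint palettes. Define $c(g,h)=c_G(g)$ if $h=0$ and $c(g,h)=c_n(h)$ if $h\neq 0$. Then $(0,0)$ is the only element colored $c_G(0)$, so $c$ is unitary, and $c$ uses exactly $\awu(G,3)+\awu(\z_n,3)-3$ colors, since $c_n(0)$ is never used. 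Given a $3$-AP, inspect its second coordinates $\{a_2,a_2+d_2,a_2+2d_2\}$: if $d_2=0$ the three points lie in one slice and are either a non-rainbow $3$-AP of $G$ under $c_G$ or monochromatic under $c_n$; if $d_2\neq 0$ then, as $n$ is odd, these are three distinct elements forming a $3$-AP of $\z_n$, which is non-rainbow under $c_n$, and because $c_n(0)$ is a unitary color the two coinciding $c_n$-values occur at nonzero coordinates, which are precisely the points not recolored by $c_G$, so the product $3$-AP is not rainbow. Hence $\awu(G,3)+\awu(\z_n,3)-2\le\awu(G\times\z_n,3)$.

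For the upper bound, suppose toward a contradiction that $c$ is a unitary exact coloring of $G\times\z_n$ with $\awu(G,3)+\awu(\z_n,3)-2$ colors and no rainbow $3$-AP, let $u$ be its uniquely colored element with color $\mu$, and partition $G\times\z_n$ into the fibers $P_g$. Two observations reduce the work. First, every fiber is isomorphic to $\z_n$ and carries no rainbow $3$-AP, so $|c(P_g)|\le\aw(\z_n,3)-1=\awu(\z_n,3)-1$, the equality being the identity $\aw(\z_n,3)=\awu(\z_n,3)$ proved at the start of this section. Second, for a base fiber of maximum color count, Lemma \ref{1_color_difference} lets us define the auxiliary coloring $\overline c$ of $G$, which has no rainbow $3$-AP by Lemma \ref{aux_coloring}. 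Since the total number of colors equals $|c(P_0)|+|\overline c(G)|-1$, it suffices to show that $\overline c$ may be taken to be \emph{unitary}: then $|\overline c(G)|\le\awu(G,3)-1$ and the total is at most $\awu(G,3)+\awu(\z_n,3)-3$, the desired contradiction.

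To force $\overline c$ to be unitary I would choose the base fiber to avoid $u$ whenever possible. If some fiber of maximum color count does not contain $u$, take it as $P_0$; then $u$ lies in a fiber $P_{g_0}$ with $g_0\neq 0$, the color $\mu$ lies in $c(P_{g_0})\setminus c(P_0)$ and in no other fiber, so Lemma \ref{1_color_difference} gives $\overline c(g_0)=\mu$ while $\overline c(g)\neq\mu$ for all other $g$; thus $\overline c$ is unitary and the contradiction follows. The only configuration that resists this choice is when the fiber $P_{g_0}$ containing $u$ is the \emph{strictly} unique fiber of maximum color count, forcing $P_{g_0}$ to be the base. The main obstacle is exactly this case: here $c|_{P_{g_0}}$ is automatically unitary, but $\mu$ now lies inside the base fiber and no longer serves as a distinguished extra color, so the unitarity of $\overline c$ must be recovered by other means.

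I would attack this case through a dichotomy on $3$-APs centered at $u=(0,h_0)$. For any $p\neq 0$ and any $s$, the points $(p,s)$ and $(-p,2h_0-s)$ together with $u$ form a $3$-AP whose outer terms lie outside $P_{g_0}$ and hence avoid the color $\mu$; if these outer terms ever receive distinct colors the $3$-AP is rainbow, a contradiction. So I may assume $c(2u-w)=c(w)$ for all $w$, i.e. $c$ is invariant under reflection through $u$. Reflection interchanges $P_g$ and $P_{-g}$ and preserves colors, whence $c(P_{-g})=c(P_g)$ and $\overline c$ is an \emph{even} coloring of $G$. I would then use evenness together with the globally unique color $\mu$ to produce a rainbow $3$-AP of the shape $\{u,(a,h_a),(2a,2h_a-h_0)\}$, selecting a fiber $a$ that carries an extra color $\eta$ whose doubled index $2a$ is nonzero and avoids $\eta$; establishing the existence of such a fiber is the delicate combinatorial step, handled by the same chaining of $3$-APs used in the proof of Theorem \ref{2_to_the_m_s}. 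Either branch of the dichotomy produces a rainbow $3$-AP, contradicting the standing assumption and completing the upper bound, so $\awu(G\times\z_n,3)=\awu(G,3)+\awu(\z_n,3)-2$.
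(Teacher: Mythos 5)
Your lower bound is correct (it is essentially the paper's own construction, and you verify unitarity of the product coloring properly), and the first half of your upper bound is sound: when some maximum-color fiber avoids $u$, taking it as the base forces $\overline{c}(g_0)=\mu$ by Lemma \ref{1_color_difference}, so $\overline{c}$ is unitary and the count $|c(P_0)|+|\overline{c}(G)|-1\le\awu(\z_n,3)+\awu(G,3)-3$ gives the contradiction. The genuine gap is in the remaining case, where the fiber containing $u$ is the strictly unique maximum fiber, and your proposed resolution cannot work: you want a fiber $P_a$ with $2a\neq 0$ carrying an extra color $\eta\notin c(P_{2a})$, so that $\{u,(a,h_a),(2a,2h_a-h_0)\}$ is rainbow. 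But such a fiber provably does not exist under your standing hypothesis. For any $v\in P_a$ with $2a\neq 0$, the set $\{u,v,2v-u\}$ is a non-degenerate $3$-AP containing $u$; since $\mu=c(u)$ occurs nowhere else, non-rainbowness forces $c(2v-u)=c(v)$, hence $c(P_a)\subseteq c(P_{2a})$. In other words, the chaining of $3$-APs you invoke from Theorem \ref{2_to_the_m_s} proves exactly the opposite of what you need: every extra color automatically propagates into the doubled fiber. The deeper problem is circularity: because $\mu$ is globally unique, the no-rainbow hypothesis kills every $3$-AP through $u$ by forcing its two other terms to share a color, so no contradiction can ever be extracted from $3$-APs through $u$ alone. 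And the counting available in this case is too weak for a Lemma \ref{aux_coloring} lift, since it yields only $|\overline{c}(G)|\ge\awu(G,3)$ while a lift needs $|\overline{c}(G)|\ge\aw(G,3)$; these differ exactly when $G$ is non-cyclic of even order, which is precisely when this case matters.

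This dead end comes from fibering in the opposite direction from the paper. The paper partitions $G\times\z_n$ into fibers $P_h$ indexed by $h\in\z_n$, each isomorphic to $G$, so its auxiliary coloring lives on $\z_n$, where $\aw(\z_n,3)=\awu(\z_n,3)$. Then, when the unique color $\rho$ lies in the base fiber, unitarity of $\overline{c}$ is never needed: $|\overline{c}(\z_n)|\ge\aw(\z_n,3)$ already lifts to a rainbow $3$-AP. When $\rho$ lies in a fiber $P_d$ with $d\neq 0$, the paper argues with explicit rainbow constructions that $c(P_0)=c(P_{-d})$ and then $|c(P_d)|=|c(P_0)|\ge\awu(G,3)$, so $P_d$ is a unitarily colored copy of $G$ with too many colors, contradicting the definition of $\awu(G,3)$. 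Your decomposition instead puts the auxiliary coloring on $G$, where the gap between $\aw$ and $\awu$ is exactly the obstruction; the hard case you isolate is real, but the tool you propose for it cannot succeed, so the proof as written is incomplete.
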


\begin{proof}
The lower bound is a direct result of Proposition \ref{awu_lowerbound}. So it suffices to show the upper bound. Assume $c$ is a coloring of $G \times \z_n$ that is unitary with exactly $aw_u(G,3) + aw_u(\z_n, 3) - 2$ colors and no rainbow 3-AP. For all $h \in \z_n$, let $P_h = \{ (g,h) \mbox{ } | \mbox{ } g \in G \}$. Without loss of generality, let $|c(P_h)| \le |c(P_0)|$ for all $h \in \z_n$.

By Lemma \ref{1_color_difference}, $|c(P_h) \char92 c(P_0)| \le 1$, for all $h \in \z_n$. Define a coloring of $\z_n$ as follows:
\[
\overline{c}(g) = \left\{
\begin{array}{lll}
\alpha && \mbox{if } c(P_h) \subset c(P_0),\\
c(P_h) \char92 c(P_0) && \mbox{otherwise. }\\
\end{array}
\right.
\] Let $\rho$ be a color used exactly once by $c$ to color $G \times \z_n$. Now consider the two cases in which $\rho \in P_0$ and $\rho \notin P_0$.

\emph{Case 1:} If $\rho \in c(P_0)$, then $|c(P_0)| \le \awu(G)-1$. Therefore $\awu(\z_n,3)  = (\awu(G,3) + \awu(\z_n, 3) - 2) -  (\awu(G)-1) + 1 \le |\overline{c}(\z_n)|$. Since $\aw(\z_n,3) = \awu(\z_n,3)$, Lemma \ref{aux_coloring} implies that $c$ creates a rainbow 3-AP.

\emph{Case 2:} If $\rho \in c(P_d)$, where $0 \neq d$, then $\overline{c}$ must be a unitary coloring of $\z_n$ and not have any 3-APs by Lemma \ref{aux_coloring}. So $|\overline{c}(\z_n)| \le \awu(\z_n,3) - 1$, which implies $\awu(G,3) = (\awu(G,3) + \awu(\z_n, 3) - 2) -  (\awu(\z_n)-2) \le |c(P_0)|$. 

If there exists $\gamma \in c(P_0) \setminus c(P_{-d})$, then there is an $x \in G$ such that $c(x, 0) = \gamma$. Now choose $(y,d)$ such that $c(y,d) = \rho$. Then $\{(2x-y,-d), (x,0), (y, d) \}$ is a rainbow 3-AP. Therefore, $|c(P_0)| = |c(P_{-d})|$. If $|c(P_0)| > |c(P_{d})|$, then there exist $\beta, \gamma \in c(P_0) \setminus c(P_{d})$ because $\rho \notin c(P_0)$. Now a rainbow 3-AP can be attained by choosing elements of $P_0$ and $P_{-d}$ that are assigned $\beta$ and $\gamma$, respectively, and the corresponding element of $P_{d}$. Hence, $\awu(G,3) \le |c(P_0)| = |c(P_d)|$. However, since there is only one element in $P_d$ with the color $\rho$, this implies that $P_d$ contains a rainbow 3-AP, which is a contradiction.
\end{proof}

Theorem \ref{singleton_bound} yields the following corollary that is analogous to corollary \ref{awformula}.

\begin{corollary}\label{awuformula}
Let $n$ be the largest odd divisor of the order of $G$. There exists a finite abelian group $G'$ such that the order of $G'$ is a power of $2$ and $$\awu(G,3) = \awu(G',3) + \awu(\z_n,3) -2.$$ 
\end{corollary}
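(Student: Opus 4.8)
The plan is to mirror the proof of Corollary \ref{awformula} almost verbatim, simply replacing every $\aw$ with $\awu$ and citing the unitary analogues of the auxiliary results. First I would establish, for each odd prime $p$ and positive integer $e$, the closed form $\awu(\z_{p^e},3) = e(\awu(\z_p,3)-2)+2$. This follows by induction using Theorem \ref{singleton_bound} with $G = \z_{p^{e-1}}$ and the odd integer $n = p$: that theorem gives $\awu(\z_{p^{e-1}} \times \z_p,3) = \awu(\z_{p^{e-1}},3) + \awu(\z_p,3) - 2$, and since $\z_{p^{e-1}} \times \z_p \cong \z_{p^e}$ is immaterial to the argument (the theorem is stated for the direct product, which is what we have), the recursion unwinds to the stated formula.

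Next I would handle a general odd prime-power decomposition. For distinct odd primes or for a fixed odd prime with several factors, repeated application of Theorem \ref{singleton_bound} yields $\awu(\z_{p^{e_1}} \times \cdots \times \z_{p^{e_\ell}},3) = 2 + (\awu(\z_p,3)-2)\sum_{i=1}^\ell e_i$, exactly as in the $\aw$ case. The key point is that Theorem \ref{singleton_bound} lets me peel off one odd factor at a time while the rest is absorbed into the group $G$, so I never need $G$ itself to have odd order. Combining the contributions across all odd primes dividing $n$ shows that $\awu$ depends only on the odd order $n$, not on its particular factorization into a direct sum.

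Finally, writing $G = G' \times \z_n$ where $G'$ has order a power of $2$ and $n$ is the largest odd divisor of $|G|$, I would apply Theorem \ref{singleton_bound} once with this $G'$ and this odd $n$ to obtain
\[
\awu(G,3) = \awu(G' \times \z_n,3) = \awu(G',3) + \awu(\z_n,3) - 2,
\]
which is precisely the claim. I would spell this out in a short \texttt{eqnarray*} block matching the style of Corollary \ref{awformula}'s proof.

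I do not expect a genuine obstacle here, since every structural lemma needed (the unitary reduction formula, Theorem \ref{singleton_bound}) is already proved in the excerpt and plays exactly the role Theorem \ref{aw_G_Zp_equality} played for $\aw$. The one place to be careful is that Theorem \ref{singleton_bound} requires the factor being peeled off to have \emph{odd} order; since every odd prime power is odd this is automatic, but I would make sure to always split off an odd-order piece and fold the $2$-power part (and any already-processed odd factors) into the generic group $G$, rather than trying to reduce $G'$ itself. The rest is routine bookkeeping identical to the non-unitary version.
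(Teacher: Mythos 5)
Your overall plan---peel off odd cyclic factors with Theorem \ref{singleton_bound}, exactly as Corollary \ref{awformula} peels them off with Theorem \ref{aw_G_Zp_equality}---is indeed the paper's intended route (the paper gives no separate proof, saying only that Theorem \ref{singleton_bound} ``yields'' the corollary analogously). However, your first step has a genuine gap. You claim $\awu(\z_{p^e},3) = e(\awu(\z_p,3)-2)+2$ follows by induction from Theorem \ref{singleton_bound} applied to $\z_{p^{e-1}} \times \z_p$, and you dismiss the isomorphism question as ``immaterial.'' It is material: for $e \ge 2$, $\z_{p^{e-1}} \times \z_p$ is \emph{not} isomorphic to $\z_{p^e}$ (it has exponent $p^{e-1}$, not $p^e$), so your recursion computes $\awu$ of a different group and says nothing about the cyclic group $\z_{p^e}$. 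In fact no amount of applying Theorem \ref{singleton_bound} can ever produce the value $\awu(\z_{p^e},3)$, since that theorem only relates a direct product to its factors, and $\z_{p^e}$ admits no nontrivial splitting with an odd cyclic factor other than itself.

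You also cannot sidestep needing the cyclic value. The corollary's right-hand side contains $\awu(\z_n,3)$ for the \emph{cyclic} group of order $n$, while the odd part of $G$ may be non-cyclic: for $G = \z_2 \times \z_3 \times \z_3$ you have $n=9$, and peeling with Theorem \ref{singleton_bound} gives $\awu(G,3) = \awu(\z_2,3) + 2\awu(\z_3,3) - 4$, so you still must prove $\awu(\z_9,3) = 2\awu(\z_3,3)-2$---exactly the prime-power formula your induction fails to deliver (the Chinese Remainder Theorem rescues the case of distinct primes, but not repeated ones). The repair is what the paper's proof of Corollary \ref{awformula} actually does at the corresponding step: invoke the cyclic-group results rather than a product recursion. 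The proposition at the start of Section \ref{sec:aw} gives $\awu(\z_m,3) = \aw(\z_m,3)$ for every cyclic group, and Theorem \ref{awequals} gives $\aw(\z_{p^e},3) = e(\aw(\z_p,3)-2)+2$; combining these yields the formula you wanted, and with that in hand the rest of your argument (peeling with Theorem \ref{singleton_bound} and reducing to $G' \times \z_n$) goes through.
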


\subsection{Groups with power of 2 order}
\begin{proposition}\label{au_2_to_the_s}
For all positive integers $s$, $$\awu([\z_2]^s,3) = s+2.$$
\end{proposition}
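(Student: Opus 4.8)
The plan is to establish the two matching bounds $s+2 \le \awu([\z_2]^s,3)$ and $\awu([\z_2]^s,3) \le s+2$, treating the second by induction on $s$ and obtaining the first from a recursive product construction.

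For the lower bound I would exhibit an exact unitary coloring of $[\z_2]^s$ with $s+1$ colors and no rainbow $3$-AP, which forces $\awu([\z_2]^s,3) \ge s+2$. The cleanest route mirrors the constructions in Propositions \ref{awu_lowerbound} and \ref{aw_awu_lowerbound}: writing $[\z_2]^s = [\z_2]^{s-1} \times \z_2$, one combines a unitary coloring of $[\z_2]^{s-1}$ realizing $\awu([\z_2]^{s-1},3)-1$ colors with a unitary coloring of $\z_2$ to produce a unitary coloring of $[\z_2]^s$ with $\awu([\z_2]^{s-1},3)+\awu(\z_2,3)-3 = \awu([\z_2]^{s-1},3)$ colors and no rainbow $3$-AP; since $\awu(\z_2,3)=3$, this gives $\awu([\z_2]^s,3) \ge \awu([\z_2]^{s-1},3)+1$, and iterating from $\awu(\z_2,3)=3$ yields $s+2$. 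The same coloring can be written down directly by giving the zero vector its own (unique) color and coloring each nonzero $x=(x_1,\dots,x_s)$ by the index of its least nonzero coordinate; this uses exactly $s+1$ colors, is unitary, and one checks directly that no $3$-AP receives three distinct colors.

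For the upper bound I would induct on $s$, with base case $s=1$ given by $\awu(\z_2,3)=3=1+2$. For the inductive step, assume $\awu([\z_2]^{s-1},3)=s+1$ and let $c$ be an exact unitary $(s+2)$-coloring of $[\z_2]^s$. Splitting off the last coordinate gives fibers $A=[\z_2]^{s-1}\times\{0\}$ and $B=[\z_2]^{s-1}\times\{1\}$, each isomorphic to $[\z_2]^{s-1}$. Since $|c(A)\cup c(B)|=s+2=(s+1)+1$, a counting argument forces one fiber, say $A$, to receive at least $s+1$ colors; the aim is then to show that $c$ restricted to $A$ is still a \emph{unitary} coloring, so the induction hypothesis produces a rainbow $3$-AP inside $A$. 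I would track the global unitary color $\rho$ of $c$ and argue, via an exchange argument in the spirit of Lemma \ref{aux_coloring}, the case analysis of Theorem \ref{singleton_bound}, and the color-propagation between fibers used in Theorem \ref{2_to_the_m_s}, that the heavy fiber can be taken to retain a singleton color (relocating $\rho$ or a substitute into it when necessary).

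The step I expect to be the main obstacle is precisely this last one. In Theorem \ref{2_to_the_m_s} the quotient direction is a large cyclic $2$-group, so one can build an auxiliary $3$-coloring of the quotient and invoke Theorem \ref{2m} to lift a rainbow $3$-AP; here the quotient is only $\z_2$, which carries no non-degenerate $3$-AP, so that lifting mechanism is unavailable and the whole argument must be pushed down into a single fiber. Guaranteeing that the heavy fiber simultaneously has at least $\awu([\z_2]^{s-1},3)$ colors and remains unitary — in particular handling the case where the unique color $\rho$ of $c$ lies in the lighter fiber, which would otherwise destroy the singleton needed in the heavy fiber — is the crux, and is where careful bookkeeping of the one-color overlap between $c(A)$ and $c(B)$ will be required.
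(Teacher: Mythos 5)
Your lower bound is fine and is essentially the paper's: the paper also proceeds inductively, copying a unitary coloring of $[\z_2]^{s-1}$ onto both fibers $[\z_2]^{s-1}\times\{0\}$ and $[\z_2]^{s-1}\times\{1\}$ and then giving $(a,0)$ a fresh unique color; your product construction and your ``least nonzero coordinate'' coloring accomplish the same thing. The upper bound, however, has a genuine gap, and it starts with the counting step: from $|c(A)\cup c(B)|=s+2$ you cannot conclude that one fiber receives at least $s+1$ colors, since the $s+2$ colors could split between $A$ and $B$ as evenly as $\lceil (s+2)/2\rceil$ and $\lfloor (s+2)/2\rfloor$ with no overlap. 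The paper's argument runs in the opposite direction: because the globally unique color can be assumed to lie in $A$, the restriction of $c$ to $A$ is automatically unitary, so the induction hypothesis gives the \emph{upper} bound $|c(A)|\le s$; this forces two colors $\alpha,\beta\in c(B)\setminus c(A)$, and the rainbow 3-AP is then assembled \emph{across} the fibers from the unique element $(a,0)$, an element of $B$ colored $\alpha$ or $\beta$, and a third point back in $A$.

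The second gap is the ``crux'' you flag at the end, and it is worse than an obstacle: it is unachievable as you have set it up. A rainbow 3-AP inside the heavy fiber $A\cong[\z_2]^{s-1}$ cannot exist, because in a group of exponent $2$ every 3-AP satisfies $a+2d=a$ and is therefore degenerate, hence never rainbow under the paper's definition. So no amount of bookkeeping will let the induction hypothesis ``produce a rainbow 3-AP inside $A$''; any proof must leave the fiber. Your observation that the quotient $\z_2$ carries no non-degenerate 3-AP is correct, but the same is true of the whole group $[\z_2]^s$, which is precisely what makes this base case delicate: even the paper's cross-fiber 3-AP $\{(a,0),(b,1),(2b-a,0)\}$ requires $2a\neq 2b$ to be non-degenerate, a condition that is vacuously unsatisfiable in $[\z_2]^{s-1}$ (this step does work in Theorem \ref{awu_2_to_m_s}, where the hypothesis $m_s\ge 2$ places the third point $(2b-a,4j+2)$ in a fiber different from that of $(a,0)$). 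So your instinct about where the difficulty lies is right, but the proposal does not close it, and closing it requires confronting the degeneracy of all 3-APs in $[\z_2]^s$ head-on rather than pushing the induction into a single fiber.
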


\begin{proof}
This proof is by induction on $s$. The base case of $s=1$ is trivial since $2 < \awu([\z_2]^s,3) \le \aw([\z_2]^s,3) = 3.$ Assume $1 < s$.

Let $c'$ be a coloring of $[\z_2]^{s-1}$ with s+1 colors, no rainbow 3-AP and $a$ be an element of $[\z_2]^{s-1}$ that does not share a color with any other element. For all $g \in [\z_2]^{s-1}$ and $h \in \z_2$, except $(a,0)$, let $c(g,h) = c'(g)$ and assign $c(a,0)$ a new color. Then $c$ is a unitary $(s+1)$-coloring of $[\z_2]^{s}$ with no rainbow 3-AP. So, $s+2 \le \awu([\z_2]^s,3)$.

Now assume $[\z_2]^s$ is colored with a unitary coloring that has exactly $s+2$ colors and no rainbow 3-AP. Let $A = \{(g,0): g \in [\z_2]^{s-1} \}$ and $B = \{(g,1): g \in [\z_2]^{s-1} \}$. Without loss of generality, let $(a,0)$ be an element of $[\z_2]^s$ that does not share a color with any other element. Therefore, by induction, $|c(A)| \le s$. So there exists 2 colors, $\alpha, \beta \in c(B) \setminus c(A)$. Then there exists an element $(b,1)$ such that $c(b,1) \in \{\alpha, \beta\}$ and $2a \neq 2b$. Therefore, the 3-AP $\{(a,0), (b,1), (2b-a,0) \}$ must be rainbow, which is a contradiction. So, $\awu([\z_2]^s,3) \le s+2$.
\end{proof}

\begin{theorem}\label{awu_2_to_m_s}
For $1 \le i \le s$, let $m_i$ be a positive integer. Then $$\awu(\z_{2^{m_1}} \times \cdots \times \z_{2^{m_s}},3) = s+2.$$
\end{theorem}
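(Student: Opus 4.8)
The plan is to prove the two inequalities $\awu(\z_{2^{m_1}} \times \cdots \times \z_{2^{m_s}},3) \ge s+2$ and $\le s+2$ separately, by induction on $N = m_1 + \cdots + m_s$ (equivalently on $(s,m_s)$ after sorting $m_1 \le \cdots \le m_s$ and always peeling off the last, largest factor). The base cases are $N=s$, i.e.\ $[\z_2]^s$, handled by Proposition \ref{au_2_to_the_s}, and the single-factor case $\z_{2^m}$, handled by Theorem \ref{2m} together with $\awu=\aw$. Throughout I write $G = H \times \z_{2^{m_s}}$ with $H = \z_{2^{m_1}} \times \cdots \times \z_{2^{m_{s-1}}}$, and I let $A$ be the subgroup of elements whose last coordinate is even and $B$ the coset where it is odd; each is isomorphic (as an affine, $3$-AP--preserving structure) to $\z_{2^{m_1}} \times \cdots \times \z_{2^{m_{s-1}}} \times \z_{2^{m_s-1}}$, an $s$-fold product with strictly smaller $N$.

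For the lower bound I would exhibit an explicit unitary coloring with $s+1$ colors and no rainbow $3$-AP: color the identity with a private color $0$, and color every other $x=(x_1,\dots,x_s)$ by the \emph{largest} index $i$ at which $x_i$ attains the minimal $2$-adic valuation $v_2$ among the nonzero coordinates of $x$. This uses exactly the $s+1$ colors $\{0,1,\dots,s\}$, and it is unitary since color $0$ occurs only at the identity. The coloring is invariant under $x \mapsto 2x$ and $x \mapsto -x$, so every $3$-AP through the identity repeats a color; this invariance is in fact forced for any rainbow-free unitary coloring, since the progressions $\{0,e,2e\}$ and $\{-e,0,e\}$ through the private point must repeat a color. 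For a $3$-AP with endpoints $p_0,p_2$ and midpoint $p_1$ one has coordinatewise $p_{0,i}=p_{1,i}-d_i$ and $p_{2,i}=p_{1,i}+d_i$; on each coordinate with $v_2(p_{1,i}) \neq v_2(d_i)$ the valuations of $p_0$ and $p_2$ agree, and since $p_{2,i}-p_{0,i}=2d_i$ has valuation exactly $v_2(d_i)+1$, the ``resonant'' coordinates cannot raise the minimal-valuation index of both endpoints at once. Tracking this forces either the two endpoints to share a color or the midpoint to match an endpoint, so no $3$-AP is rainbow.

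For the upper bound, suppose toward a contradiction that $G$ admits a unitary exact $(s+2)$-coloring $c$ with no rainbow $3$-AP, with $m_s \ge 2$ in the inductive step. After translating, assume the uniquely colored element $w$ is the identity $0 \in A$, with private color $\rho$. Then $c$ restricted to the subgroup $A$ is still unitary and rainbow-free, so by the induction hypothesis it uses at most $\awu(A,3)-1 = s+1$ colors. Since $c$ uses all $s+2$ colors and $A \cup B = G$, some color $\beta$ appears only in $B$, say at a point $x_\beta$ with odd last coordinate $j$. Now consider the $3$-AP $\{0,\,x_\beta,\,2x_\beta\}$: because $m_s \ge 2$, the last coordinate $2j$ of $2x_\beta$ is nonzero and even, so the three terms are distinct and $2x_\beta \in A$. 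Its colors are $\rho$ (occurring only at $0$), $\beta$, and $c(2x_\beta) \in c(A)$; since $\rho$ is unique and $\beta \notin c(A)$, these three colors are distinct and the progression is rainbow, a contradiction. This gives $\awu(G,3) \le s+2$, and the case $s=1$ is subsumed since $2x_\beta$ still lands in the even subgroup.

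The step I expect to be the main obstacle is the no-rainbow verification for the lower-bound coloring. The upper bound is short once one observes that the private color lets a single $3$-AP through $0$ force the contradiction, and the even-last-coordinate subgroup is again a product of the required form, so the induction is clean. The lower bound, by contrast, genuinely requires the $2$-adic valuation bookkeeping sketched above: the map sending $x$ to its leading coordinate index is not a homomorphism, and controlling its behavior under the reflection $p \mapsto 2p_1 - p$ across all resonant coordinates is delicate. An alternative is to build the coloring inductively, extending a unitary $s$-coloring of $H$ by assigning the new factor the top index and one fresh color, but the rainbow-free check then reduces to the same valuation analysis.
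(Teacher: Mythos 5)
Your upper bound is correct, and it is essentially the paper's own argument: the paper makes the same split of $G$ into the sets $A$ (last coordinate even) and $B$ (last coordinate odd), applies the same induction on $\sum m_i$ to bound $|c(A)|$, and derives a rainbow $3$-AP from a color missing in $c(A)$. Your preliminary translation putting the uniquely colored element at the identity is a genuine streamlining: the paper, keeping the unique element at a general $(a,0)$, needs \emph{two} colors in $c(B)\setminus c(A)$ and the side condition $2a \neq 2b$ to build its progression $\{(a,0),(b,2j+1),(2b-a,4j+2)\}$, whereas you need only one missing color $\beta$ and the progression $\{0,x_\beta,2x_\beta\}$. The genuine gap is in your lower bound, which you correctly flag as the delicate step: your explicit valuation coloring is not rainbow-free, and the claimed invariance under $x \mapsto 2x$ is false. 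Take $G = \z_2 \times \z_8$ (so $s=2$, $m_1 = 1$, $m_2 = 3$) and $d = (1,2)$. Then $\{(0,0),(1,2),(0,4)\}$ is a non-degenerate $3$-AP. Under your coloring, $(0,0)$ gets the private color; $(1,2)$ gets color $1$, since the minimal valuation $0$ is attained only at the first coordinate; and $(0,4)$ gets color $2$, since its only nonzero coordinate is the second. That is a rainbow $3$-AP using exactly the $s+1$ colors you allotted, so the coloring proves nothing.

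The root cause is that the identity $v_2(2d_i) = v_2(d_i)+1$, which your bookkeeping relies on, fails in $\z_{2^{m_i}}$ precisely when $v_2(d_i) = m_i - 1$: doubling then annihilates that coordinate. In the example the color-carrying coordinate $1 \in \z_2$ dies under doubling while $2 \in \z_8$ survives, so the minimal-valuation index jumps from $1$ to $2$. By your own (correct) observation that any rainbow-free unitary coloring with private color at $0$ must satisfy $c(e) = c(2e)$ whenever $2e \neq 0$, this single failure already dooms the construction; note that it requires two exponents differing by at least $2$, so the coloring does behave well when all $m_i$ are equal --- which is exactly why it looks plausible, but the mixed-exponent case is the actual content of the theorem. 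Any repair must use an invariant that genuinely survives doubling: the minimal valuation does not, but the order data does. For instance, setting $\tau(x) = 2^k x$ with $k$ maximal such that $2^k x \neq 0$, one has $\tau(2x)=\tau(x)$ whenever $2x \neq 0$ and $\tau(-x)=\tau(x)$, so any coloring factoring through $\tau$ (e.g., coloring $x$ by the largest index $i$ with $\tau(x)_i \neq 0$, equivalently the largest index maximizing $m_i - v_2(x_i)$) kills every $3$-AP through the origin; one must then still verify that no $3$-AP avoiding the origin is rainbow, which is where the remaining work lies. The paper itself takes a different route for the lower bound, extending a unitary rainbow-free coloring of the $(s-1)$-fold product across the new factor and spending one fresh color on a single point; whichever construction you adopt, it has to pass the doubling-invariance test above, and yours does not.
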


\begin{proof}
This proof is inductive on $\displaystyle\sum_{i=1}^s m_i$. The base case of $\displaystyle\sum_{i=1}^s m_i = s$ is true by Proposition \ref{au_2_to_the_s}. So assume $s < \displaystyle\sum_{i=1}^s m_i$ and $2 \le m_s$.

Let $c'$ be a coloring of $\z_{2^{m_1}} \times \cdots \times \z_{2^{m_s}}$ with s+1 colors, no rainbow 3-AP and $a$ be an element of $\z_{2^{m_1}} \times \cdots \times \z_{2^{m_{(s-1)}}}$ that does not share a color with any other element. For all $g \in \z_{2^{m_1}} \times \cdots \times \z_{2^{m_{(s-1)}}}$ and $h \in \z_{2^{m_s}}$, except $(a,0)$, let $c(g,h) = c'(g)$ and assign $c(a,0)$ a new color. Then $c$ is a unitary $(s+1)$-coloring of $\z_{2^{m_1}} \times \cdots \times \z_{2^{m_s}}$ with no rainbow 3-AP. So, $s+2 \le \awu(\z_{2^{m_1}} \times \cdots \times \z_{2^{m_s}},3)$.

Now assume $\z_{2^{m_1}} \times \cdots \times \z_{2^{m_s}}$ is colored with a unitary coloring that has exactly $s+2$ colors and no rainbow 3-AP. Let $A= \{(g,h): g \in \z_{2^{m_1}} \times \cdots \times \z_{2^{m_{s-1}}}, h \in \z_{2^{m_s}}, \mbox{ and } h \mbox{ is even} \}$ and $B = \{(g,h): g \in \z_{2^{m_1}} \times \cdots \times \z_{2^{m_{s-1}}}, h \in \z_{2^{m_s}}, \mbox{ and } h \mbox{ is odd} \}$. Without loss of generality, let $(a,0)$ be an element of $\z_{2^{m_1}} \times \cdots \times \z_{2^{m_s}}$ that does not share a color with any other element. Therefore, by induction, $|c(A)| \le s$. So there exists 2 colors, $\alpha, \beta \in c(B) \setminus c(A)$. Then there exists an element $(b,2j+1)$ such that  $c(b,2j+1) \in \{\alpha, \beta\}$ and $2a \neq 2b$. Therefore, the 3-AP $\{(a,0), (b,2j+1), (2b-a,4j+2) \}$ must be rainbow, which is a contradiction. So, $\awu(\z_{2^{m_1}} \times \cdots \times \z_{2^{m_s}},3) \le s+2$.
\end{proof}

\begin{corollary}
Let $G$ be a finite abelian group. Then $\aw(G,3) = \awu(G,3)$ if and only if the order of $G$ is odd or $G$ is cyclic.
\end{corollary}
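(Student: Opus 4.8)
The plan is to collapse the whole question to a comparison of $\aw$ and $\awu$ on the $2$-part of $G$, and then to read the answer off the explicit formulas for groups of $2$-power order. Write $G \cong G' \times H$, where $G'$ is the Sylow $2$-subgroup of $G$ and $H$ has odd order $n$, the largest odd divisor of $|G|$. By Corollary \ref{awformula}, $\aw(G,3) = \aw(G',3) + \aw(\z_n,3) - 2$, and by Corollary \ref{awuformula}, $\awu(G,3) = \awu(G',3) + \awu(\z_n,3) - 2$. Since $\aw(\z_n,3) = \awu(\z_n,3)$ for the odd cyclic group $\z_n$ (the identity established just after Theorem \ref{awequals}), subtracting the two relations gives
\[
\aw(G,3) - \awu(G,3) = \aw(G',3) - \awu(G',3),
\]
so the odd part cancels and only the structure of $G'$ survives.

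Next I would evaluate the right-hand side with the two closed formulas for $2$-power order. Writing $G' \cong \z_{2^{m_1}} \times \cdots \times \z_{2^{m_s}}$, where $s$ is the number of even-order summands in a direct decomposition of $G$, Theorem \ref{2_to_the_m_s} gives $\aw(G',3) = 2^s + 1$ and Theorem \ref{awu_2_to_m_s} gives $\awu(G',3) = s + 2$ (both remain valid, with common value $2$, in the degenerate case $s=0$ where $G'$ is trivial). Hence
\[
\aw(G,3) - \awu(G,3) = (2^s + 1) - (s + 2) = 2^s - s - 1.
\]
The elementary inequality $2^s \ge s+1$, with equality exactly at $s = 0$ and $s = 1$ and strict for $s \ge 2$ (by a one-line induction, $2^{s+1} = 2\cdot 2^s \ge 2(s+1) > s+2$ once $s \ge 1$), shows that $\aw(G,3) = \awu(G,3)$ if and only if $s \le 1$.

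The remaining task is to translate the arithmetic condition $s \le 1$ into the stated group-theoretic dichotomy, and I expect this to be the only delicate point. The integer $s$ is the rank of the Sylow $2$-subgroup $G'$, so $s \le 1$ holds precisely when $G'$ is cyclic. The two headline cases fall out directly: $s = 0$ means $G'$ is trivial, i.e. $|G|$ is odd, while a cyclic $G$ has all Sylow subgroups cyclic and hence $s \le 1$; thus the ``if'' direction is immediate, since each listed hypothesis forces $2^s - s - 1 = 0$. The converse amounts to showing that failing the hypothesis forces $s \ge 2$ and therefore the strict inequality $2^s > s+1$ proved above. Here the care needed is twofold: first, that $s$ is genuinely an invariant of $G$, so that ``the number of even-order summands'' is independent of the chosen decomposition (this follows from uniqueness of the primary decomposition of a finite abelian group), and second, that the criterion one actually obtains is that the Sylow $2$-subgroup be cyclic, which is exactly how $s \le 1$ should be matched against the two named families. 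Pinning down this structural identification cleanly, rather than the inequality itself, is the main obstacle.
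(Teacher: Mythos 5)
Your computation is exactly the paper's: both arguments split off the odd part with Corollaries \ref{awformula} and \ref{awuformula}, cancel it using $\awu(\z_n,3)=\aw(\z_n,3)$, substitute $\aw(G',3)=2^s+1$ (Theorem \ref{2_to_the_m_s}) and $\awu(G',3)=s+2$ (Theorem \ref{awu_2_to_m_s}), and reduce the question to $2^s-s-1=0$, i.e.\ $s\in\{0,1\}$. Everything you actually prove is correct, and it is literally where the paper's own proof stops.

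The step you flag as ``the main obstacle'' --- converting $s\le 1$ into ``$|G|$ odd or $G$ cyclic'' --- is a genuine gap, and it cannot be closed, because the converse you set up (``failing the hypothesis forces $s\ge 2$'') is false. The condition $s\le 1$ says precisely that the Sylow $2$-subgroup of $G$ is cyclic, and that is strictly weaker than the stated hypothesis: take $G=\z_3\times\z_6\cong\z_3\times\z_3\times\z_2$. Here $s=1$, so the formulas give $\aw(G,3)=\awu(G,3)$ (both equal $3+4-2=5$), yet $|G|=18$ is even and $G$ is not cyclic. So the ``only if'' direction of the corollary fails for this $G$; the correct statement is that $\aw(G,3)=\awu(G,3)$ if and only if the Sylow $2$-subgroup of $G$ is cyclic, equivalently $s\le 1$. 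To be fair, the paper's proof shares this defect --- it ends at ``$s$ is $0$ or $1$'' and never attempts the translation --- so your instinct that this identification, not the inequality $2^s\ge s+1$, is the delicate point is exactly right: it is the point at which the corollary, as phrased, breaks.
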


\begin{proof}
By Corollary \ref{awformula} and Theorem \ref{awu_2_to_m_s}, $$\aw(G,3) = 2^s + \aw(\z_n,3) - 1,$$ for some nonnegative integer $s$ and odd integer $n$. By Corollary \ref{awuformula} and Theorem \ref{2_to_the_m_s}, $$\awu(G,3) = s + \awu(\z_n,3),$$ for the same $s$ and $n$. Therefore, $\aw(G,3) = \awu(G,3)$ if and only if $2^s - 1 = s$; hence, $\aw(G,3) = \awu(G,3)$ if and only if $s$ is $0$ or $1$.
\end{proof}

\end{document}